\newcommand{\m}{\mu}
\newcounter{cnt1}
\newcounter{cnt2}
\newcounter{cnt3}
\newcommand{\blr}{\begin{list}{$($\roman{cnt1}$)$}
 {\usecounter{cnt1} \setlength{\topsep}{0pt}
 \setlength{\itemsep}{0pt}}}
\newcommand{\bla}{\begin{list}{$($\alph{cnt2}$)$}
 {\usecounter{cnt2} \setlength{\topsep}{0pt}
 \setlength{\itemsep}{0pt}}}
\newcommand{\bln}{\begin{list}{$($\arabic{cnt3}$)$}
 {\usecounter{cnt3} \setlength{\topsep}{0pt}
 \setlength{\itemsep}{0pt}}}
\newcommand{\el}{\end{list}}
\newtheorem{thm}{Theorem}[section]
\newtheorem{Def}[thm]{Definition}
\newtheorem{prop}[thm]{Proposition}
\newtheorem{rem}[thm]{Remark}
\newcommand{\Rem}{\begin{rem} \rm}
\newcommand{\bdfn}{\begin{Def} \rm}
\newcommand{\edfn}{\end{Def}}
\newcommand{\ba}{\begin{array}}
\newcommand{\ea}{\end{array}}
\begin{document} 
	\begin{center} 
{\Large{\bf{ Kuelbs-Steadman spaces for 
Banach space-valued measures}}}
	\normalsize
	\vspace{0.3cm} 
	
	Antonio Boccuto$^{1,\ast}$, Bipan Hazarika$^2$ and Hemanta Kalita$^3$
		\vspace{0.3cm}
		
\noindent $^1$Department of Mathematics and Computer Sciences, via Vanvitelli, 1  I-06123 Perugia, Italy\\	
$^2$Department of  Mathematics, Gauhati University, Guwahati-781014, Assam, India.\\
	$^3$Department of Mathematics, Patkai Christian College (Autonomous),
	Dimapur, Patkai-797103, Nagaland, India	\\

\noindent E-mail:   antonio.boccuto@unipg.it; 
bh\_rgu@yahoo.co.in; bh\_gu@gauhati.ac.in;\\
hemanta30kalita@gmail.com

\normalsize
\title{}

\thanks{{\today; $^\ast$Corresponding author}}
\maketitle
\begin{abstract}
We introduce Kuelbs-Steadman-type spaces for 
real-valued functions,
with respect to countably additive measures, taking values in
Banach spaces. We investigate their main properties
and embeddings in $L^p$-type spaces, considering 
both the norm associated to norm convergence of the 
involved integrals and that related to weak convergence   
of the integrals.\\
\noindent {\bf Key Words:} Kuelbs-Steadman space; Henstock-Kurzweil integrable function; Vector measure; Dense; Continuous embedding;  K\"{o}the space; Banach Lattice.\\
\noindent {\bf AMS Subject Classification No:} 26A39, 46B25, 46E35, 46E39, 46F25. 
\end{abstract}
\end{center}
\section{Introduction}
Kuelbs-Steadman spaces have been the subject of many
recent studies (see e.g. \cite{GILLSURVEY, GZ, KHM} 
and the references therein). The investigation
of such spaces arises from the idea to consider the 
$L^1$ spaces as embedded in a larger Hilbert space with
smaller norm, and containing in a certain sense the 
Henstock-Kurzweil integrable functions.  This allows 
to give several applications to Functional Analysis and
other branches of Mathematics, for instance Gaussian 
measures (see also \cite{KUELBS}), convolution operators,
Fourier transforms, Feynman integral, quantum mechanics,
differential equations and Markov 
chains (see also \cite{GILLSURVEY, GZ, KHM}).
This approach allows also to develop a theory of Functional 
Analysis which includes Sobolev-type spaces, 
in connection with Kuelbs-Steadman spaces rather
than with classical $L^p$ spaces. 

In this paper we extend the theory of 
Kuelbs-Steadman spaces to measures $\mu$
defined on a $\sigma$-algebra
and with values in a Banach space $X$.
We consider an integral 
for real-valued functions $f$ with respect to
$X$-valued countably additive measures.
In this setting,
a fundamental role is played by the separability of $\mu$. 
This condition is satisfied, for instance,
when $T$ is a metrizable separable space, not 
necessarily with a Schauder basis (such spaces exist,
see for instance \cite{GILLSURVEY}), and $\mu$ is a 
Radon measure. In the literature, some 
deeply investigated particular cases are when 
$X=\mathbb{R}^n$ and $\mu$ is the Lebesgue measure,
and when $X$ is a Banach space with 
a Schauder basis (see also \cite{GILLSURVEY, GZ, KHM}).
Since the integral of $f$ with respect to $\mu$
is an element of $X$, in general it is  
not natural to define an inner product, when it is dealt
with norm convergence of the involved integrals. Moreover, 
when $\mu$ is a vector measure, the spaces 
$L^p[\mu]$ do not satisfy all classical properties as 
the spaces $L^p$ with respect to a scalar measure
(see also \cite{IRENE, OKADA, EP}).
However, it is always possible 
to define Kuelbs-Steadman spaces as Banach 
spaces, which are completions of suitable $L^p$ spaces. 
We introduce them and prove that they are normed 
spaces, and 
that the embeddings of $KS^p[\mu]$ into
$L^q[\mu]$ are continuous and dense. Moreover, we show that the 
norm of $KS^p$ spaces is smaller than that related to the 
space of all Henstock-Kurzweil integrable functions 
(the Alexiewicz norm). Furthermore, we prove that 
$KS^p$ spaces are K\"{o}the function spaces and
Banach lattices, extending to the setting of
$K{S^p}[\mu]$-spaces some results proved in 
\cite{EP} for spaces of type $L^p[\mu]$.
Furthermore, when $X^{\prime}$ is separable, 
it is possible to consider a topology associated to 
weak convergence of integrals and to define 
a corresponding norm and an inner product.
We introduce the Kuelbs-Steadman spaces related to
this norm, and prove the analogous properties
investigated for $KS^p$ spaces related to
norm convergence of the integrals.
In this case, 
since we deal with a separable Hilbert space, it is possible to 
consider operators like convolution and Fourier transform, and 
to extend the theory developed
in \cite{GILLSURVEY, GZ, KHM}
to the context of Banach space-valued measures.  
\section{Vector measures, $(HKL)$- and $(KL)$-integrals}
Let $T \neq \emptyset$ be an abstract set,
${\mathcal P}(T)$ be the class of all subsets of 
$T$, $\Sigma \subset {\mathcal P}(T)$ be
a $\sigma$-algebra, 
$X$ be a Banach space and $X^{\prime}$ be its 
topological dual. For each $A \in \Sigma$, let us
denote by $\chi_A$
the \emph{characteristic function} of $A$, defined by
\begin{eqnarray*}
\chi_A(t)=\left\{
\begin{array}{ll}
1 & \text{if  } t \in A, \\ \\
0 & \text{if  } t \in T \setminus A.
\end{array}\right. 
\end{eqnarray*}
A \emph{vector measure} is a
$\sigma$-additive set function $\mu: \Sigma \to X$. 
By the Orlicz-Pettis theorem (see also 
\cite[Corollary 1.4]{JD}), 
the $\sigma$-additivity of $\mu$ is equivalent to the
$\sigma$-additivity of 
the scalar-valued set function 
$x^{\prime} \mu : A \mapsto x^{\prime}(\mu(A))$ on 
$\Sigma $ for every $x^{\prime} \in X^{\prime}$.
For a literature on vector measures, see also 
\cite{CURBERA, JD, IRENE, KK, DR, OKADA, PANCHA} 
and the references therein. 

The \emph{variation} $|\mu|$ of $\mu$ is defined by setting
\begin{eqnarray*}\label{variation}
|\mu|(A)= \sup \left\{
\sum_{i=1}^r \|\mu(A_i)\|: A_i \in \Sigma,
i=1,2, \ldots, r; 
A_i \cap A_j = \emptyset \text{  for  }
i \neq j ; \, \bigcup_{i=1}^r A_i \subset A
\right\}.
\end{eqnarray*}
We define the \emph{semivariation} $\|\mu\|$ of $\mu$ by
\begin{eqnarray}\label{domination}
\|\mu\|(A)= \sup_{x^{\prime} \in X^{\prime},
\|x^{\prime}\| \leq 1} |x^{\prime} \mu|(A).
\end{eqnarray}
\begin{rem}\label{finitesemivariation}
\rm Observe that $\|\mu\|(A) < + \infty$ for all 
$A \in \Sigma$ (see also \cite[Corollary 1.19]{JD}, 
\cite[\S1]{OKADA}).
\end{rem}
The \emph{completion} of $\Sigma$ with respect to
$\|\mu\|$ is defined by 
\begin{eqnarray}\label{completion}
\widetilde{\Sigma}= \{A= B \cup N: B \in \Sigma, 
N \subset M \in \Sigma \text{  with  } \|\mu\|(M)=0\}.
\end{eqnarray}
A function $f:T \to \mathbb{R}$ is said to be 
\emph{$\mu$-measurable} if 
\begin{eqnarray*}\label{mumeasurability}
f^{-1}(B) \cap \{ t \in T: f(t) \neq 0 \}\in 
\widetilde{\Sigma} 
\end{eqnarray*}
for each Borel subset $B \subset \mathbb{R}$.

Observe that from (\ref{domination}) and (\ref{completion})
it follows that every $\mu$-measurable real-valued
function is also $x^{\prime} \mu$-measurable 
for every $x^{\prime} \in X^{\prime}$. 
Moreover, it is readily seen that every 
$\Sigma$-measurable real-valued function is also
$\mu$-measurable.   

We say that $\mu$ is \emph{$\Sigma$-separable} (or
\emph{separable}) if there is 
a countable family $\mathbb{B}=(B_k)_k$ in $\Sigma$ such that,
for each $A \in \Sigma$ and $\varepsilon > 0$, there is
$k_0 \in \mathbb{N}$ such that
\begin{eqnarray}\label{sep}
\|\mu\|(A \Delta B_{k_0})=
\sup_{x^{\prime} \in X^{\prime}, 
\|x^{\prime}\| \leq 1}
[\,|x^{\prime} \mu |(A \Delta B_{k_0}) ] \leq \varepsilon
\end{eqnarray}
(see also \cite{RICKER}). Such a family $\mathbb{B}$ is said 
to be \emph{$\mu$-dense}.

Observe that $\mu$ is $\Sigma$-separable if and only if 
$\Sigma$ is \emph{$\mu$-essentially countably generated},
namely there is a countably generated 
$\sigma$-algebra $\Sigma_0 \subset \Sigma$ such that
for each $A \in \Sigma$ there is $B \in \Sigma_0$ with
$\mu(A \Delta B)=0$. The separability of $\mu$ 
is satisfied, for instance, when 
$T$ is a separable metrizable space, $\Sigma$ is the
Borel $\sigma$-algebra of the Borel subsets of $T$, 
and $\mu$ is a Radon measure
(see also \cite[Theorem 4.13]{BREZIS}, 
\cite[Theorem 1.0]{DZAMONJA},
\cite[\S1.3 and \S2.6]{KK},
\cite[Propositions 1A and 3]{RICKER}).

From now on, we assume that $\mu$ is separable, and 
$\mathbb{B}=(B_k)_k$ is a $\mu$-dense family in $\Sigma$ with 
\begin{eqnarray}\label{semivariation}
\|\mu\|(B_k) \leq M= \|\mu\|(T)+1 \quad \text{for all   } k \in \mathbb{N}.
\end{eqnarray}

Now we recall the Henstock-Kurzweil (in short, 
$(HK)$)-integral 
for real-valued functions, defined on abstract sets,
with respect to (possibly infinite) non-negative measures.
For a related literature, see also 
\cite{BCSHENSTOCK, BRV, CDPMS0, CDPMS1, CSSISY, 
CSPANAM, CAO, DPHENSTOCK, DPM, AB, FM, GORDON, 
LEE, PFEFFER, RIECAN}
and the references therein.
When we deal with the $(HK)$-integral,
we assume that 
$T$ is a compact topological space and 
$\Sigma$ is the $\sigma$-algebra of all 
Borel subsets of $T$. We will not do these
assumptions to prove the results 
which do not involve the $(HK)$-integral.

Let $\nu:\Sigma \to \mathbb{R}
\cup \{+ \infty\}
$ be a $\sigma$-additive non-negative measure.  
A \emph{decomposition} of a set 
$A \in \Sigma$ is a finite collection
$\{(A_1, \xi_1)$, $(A_2,\xi_2),
\ldots, (A_N, \xi_N)\}$ such that $
A_j \in \Sigma$ and $\xi_j \in A_j$ 
for every $j \in \{1$, $2, \ldots, N\}$,
and $\nu(A_i \cap A_j)=0$ whenever $i \neq
j$. A decomposition of
subsets of $A \in \Sigma$ is called a \emph{partition} of $A$ when
$\displaystyle{\bigcup_{j=1}^N A_j=A}$.
A \emph{gauge} on a set $A \in \Sigma$ is a map $\delta$
assigning to each point $x \in A$ a neighborhood $\delta(x)$ of
$x$. If ${\mathcal D}=\{(A_1, \xi_1)$, $(A_2,\xi_2),
\ldots, (A_N, \xi_N)\}$ is a
decomposition of $A$ and $\delta$ is a gauge on $A$, then we say that
${\mathcal D}$ is \emph{$\delta$-fine} if $A_j \subset \delta(\xi_j)$
for any $j \in \{ 1$, $2, \ldots, N \}$.

An example is when 
$T_0$ is a 
locally compact and Hausdorff topological
space, and $T = T_0 \cup \{ x_0 \}$ is
the one-point compactification of $T_0$. In this case, we
will suppose that all involved functions $f$ 
vanish on $x_0$. For instance 
this is the case, when
$T_0 =\mathbb{R}^n$ is
endowed with the usual topology 
and $x_0$ is a point ``at the infinity", or when  
$T$ is the unbounded interval $[a, + \infty]=[a, +
\infty) \cup \{ + \infty \}$ of the extended real line,
considered as the one-point
compactification of the locally compact space $[a, + \infty)$. 
In this last case, the
base of open sets consists on 
the open subsets of $[a, + \infty)$ and
the sets of the type $(b, + \infty]$, where $ a
< b < + \infty$. Any gauge in $[a, + \infty]$ has the form
$\delta(x)=(x - d(x), x + d(x))$, if $x \in [a, + \infty] \cap
\mathbb{R}$, and 
$\delta(+\infty)=(b, + \infty] = (b, + \infty) \cup
\{ + \infty \}$, where $d$ denotes a positive real-valued function
defined on $[a, + \infty)$. 
Now we define the \emph{Riemann sums} by
$\displaystyle{
S(f,{\mathcal D})= \sum_{j=1}^N \, f(\xi_j) \nu (A_j)}$ 
if the sum exists in $\mathbb{R}$, with the convention 
$0 \cdot (+\infty)=0$.
Note that for any
gauge $\delta$ 
there exists at least one $\delta$-fine partition
${\mathcal D}$ such that $S(f,{\mathcal D})$ is well-defined.

A function $f:T \to \mathbb{R}$ is said to be 
\emph{Henstock-Kurzweil
integrable} (briefly, \emph{$(HK)$-integrable})
on a set $A\in \Sigma$ if there is an element
$I_A \in \mathbb{R}$ such
that for every $\varepsilon >0$ there is a gauge $\delta$ on
$A$ with
$| S(f, {\mathcal D}) - I_A| \leq \varepsilon$
whenever ${\mathcal D}$ is a $\delta$-fine partition of $A$
such that $S(f, {\mathcal D})$ exists in $\mathbb{R}$, 
and we write $$\displaystyle{(HK)
\int_A \, f \, d\nu}=I_A.$$ 
Observe that, if $A$, $B\in \Sigma$,
$B \subset A$ and $f:T \to \mathbb{R}$ is 
$(HK)$-integrable on $A$,  then $f$ is also
$(HK)$-integrable on $B$ and on $A \setminus B$, and 
\begin{eqnarray}\label{add}
(HK) \int_A f (t) \, d\nu= (HK) \int_B f (t) \, d\nu +
(HK) \int_{A \setminus B} f (t) \, d\nu  
\end{eqnarray}
(see also \cite[Propositions 5.14 and 5.15]{BRV},
\cite[Lemma 1.10 and Proposition 1.11]{RIECAN}).
From (\ref{add})
used with $A=T$ and  
$\chi_B f$ instead of $f$, it follows that, if $f$
is $(HK)$-integrable on $T$ and $B \in \Sigma$, then
\begin{eqnarray}\label{chiB}
(HK) \int_{T} \chi_B(t) f(t) \, d\nu = 
(HK) \int_B f(t) \, d\nu . 
\end{eqnarray}  

We say that
a $\Sigma$-measurable function $f:T \to \mathbb{R}$ 
is \emph{Kluv\'{a}nek-Lewis-Lebesgue $\mu$-integrable},
shortly \emph{$(KL)$ $\mu$-integrable} (resp. 
\emph{Kluv\'{a}nek-Lewis-Henstock-Kurzweil 
$\mu$-integrable},
shortly \emph{$(HKL)$ $\mu$-integrable}) if 
the following properties hold: 
\begin{eqnarray}\label{first}
f \text{   is   } 
|x^{\prime} \mu|\text{-Lebesgue (resp. } |x^{\prime} \mu| 
\text{-Henstock-Kurzweil) integrable  for each }  
x^{\prime} \in X^{\prime},
\end{eqnarray} and for every $A \in \Sigma$ there is
$x_A^{(L)}$ (resp. $x_A^{(HK)}) \in X$ with
\begin{eqnarray}\label{second} 
x^{\prime}(x_A^{(L)})=(L) \int_{A}f \, 
d |x^{\prime} \mu| \, 
\, \text{    (resp. } x^{\prime}(x_A^{(HK)})=
(HK) \int_{A}f \, d |x^{\prime}\mu|
\text{ )   for  all  }
x^{\prime} \in X^{\prime}, 
\end{eqnarray}
where the symbols $(L)$ and $(HK)$ in (\ref{second})
denote the usual Lebesgue (resp. Henstock-Kurzweil) integral
of a real-valued function with respect to an 
(extended) real-valued measure.
A $\Sigma$-measurable function $f:T \to \mathbb{R}$ 
is said to be \emph{weakly $(KL)$} (resp.
\emph{weakly $(HKL)$)} \emph{$\mu$-integrable} if it 
satisfies only condition (\ref{first}) 
(see also \cite{CG, CURBERA, PANCHA}). 
We recall the following facts about the $(KL)$-integral.
\begin{prop}\label{simple} \rm (see also 
\cite[Theorem 2.1.5 (i)]{PANCHA}) \em
If $s: T \to \mathbb{R}$, $\displaystyle{
s=\sum_{i=1}^r \alpha_i \chi_{A_i} }$ is $\Sigma$-simple,
with $\alpha_i \in \mathbb{R}$, $A_i \in \Sigma$, 
$i=1$, $2, \ldots, r$ and $A_i \cap A_j = \emptyset $
for $i \neq j$, then $s$ is $(KL)$ $\mu$-integrable on 
$T$, and $$(KL) \int_A s \, d\mu=
\sum_{i=1}^r \alpha_i \, \mu(A \cap A_i)
\text{   for   all  } A \in \Sigma .$$
\end{prop}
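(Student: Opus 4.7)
The plan is to verify the two defining conditions (\ref{first}) and (\ref{second}) of the $(KL)$ $\mu$-integral directly, taking as candidate primitive
\begin{equation*}
x_A^{(L)} := \sum_{i=1}^r \alpha_i \, \mu(A \cap A_i),
\end{equation*}
which lies in $X$ as a finite $\mathbb{R}$-linear combination of elements of $X$.

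First I would fix an arbitrary $x^{\prime} \in X^{\prime}$. By the Orlicz-Pettis consequence recorded just after the definition of vector measure, $x^{\prime}\mu$ is a $\sigma$-additive set function on $\Sigma$; since it is $\mathbb{R}$-valued, it is a signed measure of finite total variation, and in particular $|x^{\prime}\mu|$ is a finite non-negative measure on $\Sigma$. The simple function $s$ is bounded and $\Sigma$-measurable, hence $|x^{\prime}\mu|$-Lebesgue integrable, and disjointness of the $A_i$ combined with linearity of the Lebesgue integral yields
\begin{equation*}
(L)\int_A s \, d|x^{\prime}\mu| = \sum_{i=1}^r \alpha_i \, |x^{\prime}\mu|(A \cap A_i),
\end{equation*}
with the analogous identity against the signed measure $x^{\prime}\mu$. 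This establishes condition (\ref{first}).

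For condition (\ref{second}) I would exploit linearity and continuity of $x^{\prime}$ together with the defining identity $(x^{\prime}\mu)(B) = x^{\prime}(\mu(B))$:
\begin{equation*}
x^{\prime}(x_A^{(L)}) = \sum_{i=1}^r \alpha_i \, x^{\prime}(\mu(A \cap A_i)) = \sum_{i=1}^r \alpha_i \, (x^{\prime}\mu)(A \cap A_i) = (L) \int_A s \, d(x^{\prime}\mu),
\end{equation*}
which matches the scalar integral computed in the previous step. Therefore $s$ is $(KL)$ $\mu$-integrable and $(KL)\int_A s \, d\mu = x_A^{(L)} = \sum_{i=1}^r \alpha_i \, \mu(A \cap A_i)$, as claimed.

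There is essentially no analytic obstacle: the argument rests entirely on the observation that each $x^{\prime}\mu$ is a bounded scalar $\sigma$-additive measure, which legitimizes ordinary Lebesgue integration of bounded simple functions, together with linearity of the duality pairing $\langle x^{\prime}, \cdot \rangle$. The only bookkeeping point worth checking is that the value $x_A^{(L)}$ is independent of the particular disjoint representation of $s$, which follows by refining two such representations to a common one and using finite additivity of $\mu$.
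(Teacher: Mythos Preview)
The paper does not give its own proof of this proposition; it is stated as a known fact with a reference to \cite[Theorem 2.1.5 (i)]{PANCHA}. Your direct verification is precisely the standard argument one finds there.

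There is, however, a mismatch you should flag explicitly rather than glide past. Condition (\ref{second}) as literally written in the paper requires
\[
x^{\prime}(x_A^{(L)}) = (L)\int_A s \, d|x^{\prime}\mu|,
\]
with the \emph{total variation} $|x^{\prime}\mu|$, whereas what you actually establish in your second display is
\[
x^{\prime}(x_A^{(L)}) = \sum_{i=1}^r \alpha_i\, (x^{\prime}\mu)(A\cap A_i) = (L)\int_A s \, d(x^{\prime}\mu),
\]
against the \emph{signed} measure $x^{\prime}\mu$. These quantities do not agree in general: already for $s=\chi_B$ one compares $(x^{\prime}\mu)(A\cap B)$ with $|x^{\prime}\mu|(A\cap B)$. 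Taken at face value with $|x^{\prime}\mu|$ in (\ref{second}), the formula asserted in the proposition would be false, so (\ref{second}) almost certainly intends $x^{\prime}\mu$ in place of $|x^{\prime}\mu|$, in line with the standard Kluv\'{a}nek--Lewis definition and with the cited source. Your proof is correct under that reading, but the clause ``which matches the scalar integral computed in the previous step'' conflates the two measures instead of confronting the discrepancy.
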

\begin{prop}\label{solid} \rm (see also 
\cite[Theorem 2.1.5 (vi)]{PANCHA} \em 
If $f:T \to \mathbb{R}$ is
$(KL)$ $\mu$-integrable on $T$ and
$A \in \Sigma$, then $\chi_A f$ is 
$(KL)$ $\mu$-integrable on $T$ and
\begin{eqnarray}\label{KLsubsets}
(KL) \int_A f \, d\mu=(KL) \int_{T} \chi_A  f \, d\mu.
\end{eqnarray}
\end{prop}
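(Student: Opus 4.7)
The plan is to verify the two defining conditions of $(KL)$ $\mu$-integrability for $\chi_A f$ by testing against each functional $x^{\prime} \in X^{\prime}$ and invoking the classical scalar Lebesgue theory. Once this is in place, the identity (\ref{KLsubsets}) will fall out immediately by specializing the representing element produced in the construction.

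First, $\chi_A f$ is $\Sigma$-measurable as the product of two $\Sigma$-measurable functions. To check condition (\ref{first}), I fix $x^{\prime} \in X^{\prime}$: since $f$ is $|x^{\prime}\mu|$-Lebesgue integrable by hypothesis and $\chi_A$ is bounded and measurable, $\chi_A f$ is $|x^{\prime}\mu|$-Lebesgue integrable as well, and the standard scalar identity
$$ (L) \int_B \chi_A f \, d|x^{\prime}\mu| = (L) \int_{A \cap B} f \, d|x^{\prime}\mu| $$
holds for every $B \in \Sigma$. This settles the scalar half of the definition.

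For condition (\ref{second}), I fix $B \in \Sigma$ and set $y_B := x_{A \cap B}^{(L)}$, where $x_{A \cap B}^{(L)} \in X$ is the element furnished by the $(KL)$ $\mu$-integrability of $f$ applied to $A \cap B \in \Sigma$. By definition, $x^{\prime}(y_B) = (L) \int_{A \cap B} f \, d|x^{\prime}\mu|$ for every $x^{\prime} \in X^{\prime}$, and combining this with the scalar identity above gives $x^{\prime}(y_B) = (L)\int_B \chi_A f \, d|x^{\prime}\mu|$, which is exactly what (\ref{second}) demands. Hence $\chi_A f$ is $(KL)$ $\mu$-integrable on $T$, with $(KL)\int_B \chi_A f \, d\mu = x_{A \cap B}^{(L)}$ for every $B \in \Sigma$; specializing $B = T$ yields $(KL)\int_T \chi_A f \, d\mu = x_A^{(L)} = (KL)\int_A f \, d\mu$, which is (\ref{KLsubsets}).

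I do not anticipate a real obstacle here: the argument reduces the vector statement to a standard scalar fact about the interaction of Lebesgue integration with characteristic functions, and the $(KL)$ framework is set up so that pointwise testing against $X^{\prime}$ is sufficient. The only subtlety worth flagging is the order of quantifiers, namely that the representing element must be produced for each $B$ separately by applying the $(KL)$-integrability hypothesis to the set $A \cap B$ before any identification is attempted, so that no supremum or limiting procedure on the $X$-side is required.
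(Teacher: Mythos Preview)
Your argument is correct. The paper does not actually supply a proof of this proposition; it merely records the statement and cites \cite[Theorem 2.1.5 (vi)]{PANCHA} for it. Your verification of conditions (\ref{first}) and (\ref{second}) by reducing to the scalar Lebesgue identity $(L)\int_B \chi_A f\, d|x^{\prime}\mu| = (L)\int_{A\cap B} f\, d|x^{\prime}\mu|$ and then invoking the representing element $x_{A\cap B}^{(L)}$ furnished by the hypothesis is exactly the natural route, and it matches in spirit the proof given in Panchapagesan's monograph.
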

The space $L^1[\mu]$ (resp. $L_w^1[\mu]$) is
the space of all (equivalence classes of) $(KL)$ 
$\mu$-integrable functions (resp. weakly $(KL)$ 
$\mu$-integrable functions) up to the complement
of $\mu$-almost everywhere sets. For $p > 1$, the
space $L^p[\mu]$ (resp. $L_w^p[\mu]$) is
the space of all (equivalence classes of) 
$\Sigma$-measurable functions $f$ such that $|f|^p$ 
belongs to $L^1[\mu]$ (resp. $L_w^1[\mu]$).
The space $L^{\infty}[\mu]$ is the space of all
(equivalence classes of) 
$\mu$-essentially bounded functions. The norms 
are defined by
\begin{eqnarray}\label{Lpspace}
\left\{ \begin{array}{lll}
\|f\|_{L^p[\mu]} &= \|f\|_{L_w^p[\mu]} 
=\displaystyle{\sup\limits_{x^{\prime} \in X^{\prime}, 
\|x^{\prime}\|\leq 1} 
\left(
(L)\int_{T} 
|f(t)|^p \, d|x^{\prime}\mu| \right)^{1/p} }
& \text{if  } 1 \leq p < \infty, \\ \\ 
\|f\|_{L^{\infty}[\mu]} &= 
\sup\limits_{x^{\prime} \in X^{\prime}, 
\|x^{\prime}\| \leq 1} ( |x^{\prime}\mu|\textrm{-ess sup} |f|)
& 
\end{array} \right.
\end{eqnarray}
(see also \cite{BCSLp, AFP, S}).

If $f:T \to \mathbb{R}$ is a $(HKL)$-integrable
function, then the \emph{Alexiewicz norm} of $f$ is defined by 
\begin{eqnarray*}\label{alexiewicz}
||f||_{HKL}= \sup_{x^{\prime} \in X^{\prime}, 
\|x^{\prime}\|\leq 1} \left( \sup_{A \in \Sigma} 
\left| (HK) \int_A f(t) \, d|x^{\prime} \mu| \, \right| \, \right)
\end{eqnarray*} 
(see also \cite{ALEXIEWICZ, SWARTZ}).
Observe that, by arguing analogously as in
\cite[Theorem 9.5]{GORDON} and  
\cite[Example 3.1.1]{MCLEOD},  
for each $x^{\prime} \in X^{\prime}$
we get that $f=0$ 
$|x^{\prime} \mu|$-almost everywhere if and only if 
$\displaystyle{(HK) \int_A f(t) \, d|x^{\prime} \mu|}=0$
for every $A \in \Sigma$. 
Thus, it is not difficult to see that $\|\cdot \|_{HKL}$ is a norm.
In general, the space of the real-valued
Henstock-Kurzweil integrable functions
endowed with the Alexiewicz norm is not complete
(see also \cite[Example 7.1]{SWARTZ}).
\section{Construction of the Kuelbs-Steadman spaces
and main properties}
We begin with giving the following technical results, 
which will be useful later.
\begin{prop}\label{3.1}
Let $(a_k)_k$ and $(\eta_k)_k$ 
be two sequences of non-negative 
real numbers, such that $a=\sup\limits_k a_k < + \infty$, and
\begin{eqnarray}\label{series}
\sum\limits_{k=1}^{\infty} \eta_k=1, 
\end{eqnarray}
and $p >0$ be a fixed real number.
Then,
\begin{eqnarray}\label{used}
\displaystyle{ \left(\sum_{k=1}^{\infty} \eta_k \, a_k^p
\right)^{1/p}} \leq a.
\end{eqnarray}  
\end{prop}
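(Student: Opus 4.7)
The plan is to prove the inequality by a direct monotonicity argument, using the fact that $a$ is an upper bound for the entire sequence $(a_k)_k$.

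First, I would observe that since $a=\sup_k a_k$, we have $a_k \leq a$ for every $k \in \mathbb{N}$. Because $p>0$, the map $t \mapsto t^p$ is non-decreasing on $[0,+\infty)$, so $a_k^p \leq a^p$ for all $k$. Multiplying by the non-negative weights $\eta_k$ gives $\eta_k\, a_k^p \leq \eta_k\, a^p$.

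Next, I would sum over $k$. Since the series of non-negative terms on both sides makes sense in $[0,+\infty]$, and since $\sum_{k=1}^\infty \eta_k = 1$ by (\ref{series}), we obtain
\begin{eqnarray*}
\sum_{k=1}^{\infty} \eta_k\, a_k^p \;\leq\; a^p \sum_{k=1}^{\infty} \eta_k \;=\; a^p.
\end{eqnarray*}
In particular the series on the left converges in $\mathbb{R}$. Finally, since $t \mapsto t^{1/p}$ is also non-decreasing on $[0,+\infty)$, taking the $p$-th root of both sides yields the desired inequality (\ref{used}).

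There is no real obstacle here: the only mild point to keep in mind is that $p$ is only assumed positive (not $p\geq 1$), so one should not appeal to Jensen's inequality (which would give the reverse direction for $0<p<1$) but rather just use the trivial pointwise bound $a_k \leq a$. The argument is valid uniformly for all $p>0$ precisely because it does not rely on convexity of $t \mapsto t^p$.
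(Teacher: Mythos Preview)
Your proof is correct and follows essentially the same approach as the paper: bound $\eta_k a_k^p \leq \eta_k a^p$, sum using $\sum_k \eta_k = 1$, and take the $p$-th root. Your explicit remarks about monotonicity of $t \mapsto t^p$ and $t \mapsto t^{1/p}$ on $[0,\infty)$ just make the paper's one-line argument slightly more detailed.
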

\begin{proof} We have
$\eta_k \, a_k^p \leq a^p \, \eta_k$ for all $k \in \mathbb{N}$, and hence
$$\displaystyle{
\sum_{k=1}^{\infty} \eta_k \, a_k^p \leq a^p \, 
\sum_{k=1}^{\infty}\eta_k=a^p},
$$ getting (\ref{used}).
\end{proof}
\begin{prop}\label{minkowski}
Let $(b_k)_k$, $(c_k)_k$ be two sequences of real numbers,
$(\eta_k)_k$ be a sequence of positive 
real numbers, satisfying \rm (\ref{series})\em, and $p \geq 1$
be a fixed real number. Then, 
\begin{eqnarray}\label{mink1}
\left(\sum_{k=1}^{\infty}\eta_k |b_k + c_k|^p 
\right)^{1/p} \leq \left(\sum_{k=1}^{\infty} 
\eta_k (|b_k| + |c_k|)^p \right)^{1/p}
\leq \left( \sum_{k=1}^{\infty} \eta_k |b_k|^p \right)^{1/p}
+\left(\sum_{k=1}^{\infty} \eta_k |c_k|^p \right)^{1/p}.
\end{eqnarray} 
\end{prop}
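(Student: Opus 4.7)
The statement is a weighted form of Minkowski's inequality, and my plan is to reduce it to the classical $\ell^p$ Minkowski inequality by absorbing the weights into the sequences.

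First, for the left inequality, I would simply invoke the pointwise triangle inequality $|b_k+c_k|\le |b_k|+|c_k|$. Raising to the $p$-th power (which is monotone on $[0,\infty)$ since $p\ge 1$) and multiplying by the nonnegative weight $\eta_k$ preserves the inequality termwise; summing over $k$ and then taking $p$-th roots yields the desired bound. This step is completely routine and requires no estimation beyond monotonicity.

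For the right inequality, which is the substantive part, I would pass to the auxiliary sequences $\widetilde{b}_k:=\eta_k^{1/p}|b_k|$ and $\widetilde{c}_k:=\eta_k^{1/p}|c_k|$, both nonnegative. Since $\eta_k>0$, we have $\eta_k(|b_k|+|c_k|)^p=(\widetilde{b}_k+\widetilde{c}_k)^p$, so the middle term of \eqref{mink1} equals $\bigl(\sum_{k=1}^{\infty}(\widetilde{b}_k+\widetilde{c}_k)^p\bigr)^{1/p}$, while the right-hand side equals $\bigl(\sum_{k=1}^{\infty}\widetilde{b}_k^p\bigr)^{1/p}+\bigl(\sum_{k=1}^{\infty}\widetilde{c}_k^p\bigr)^{1/p}$. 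The inequality then reduces to the classical Minkowski inequality in $\ell^p(\mathbb{N})$, which we may quote directly.

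To handle convergence issues cleanly, I would first dispose of the trivial case in which the right-hand side of \eqref{mink1} is $+\infty$, in which case there is nothing to prove. Otherwise both $\sum_k\widetilde{b}_k^p$ and $\sum_k\widetilde{c}_k^p$ are finite, so the classical Minkowski inequality applies and guarantees finiteness and the desired bound for $\sum_k(\widetilde{b}_k+\widetilde{c}_k)^p$ as well.

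I do not anticipate any real obstacle: the hypothesis \eqref{series} is not actually needed for this proposition (only $\eta_k\ge 0$ is used), and no subtle interchange of limits or measurability issue arises. The whole argument is essentially a change of variables $b_k\mapsto \eta_k^{1/p}|b_k|$ reducing the weighted statement to the unweighted one.
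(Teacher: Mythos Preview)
Your proposal is correct and takes essentially the same approach as the paper, which simply records the result as a consequence of Minkowski's inequality (citing Hardy--Littlewood--P\'olya). Your reduction via $\widetilde{b}_k=\eta_k^{1/p}|b_k|$, $\widetilde{c}_k=\eta_k^{1/p}|c_k|$ is exactly the standard way to derive the weighted statement from the classical $\ell^p$ version, and your observation that condition~(\ref{series}) is not actually used here is also accurate.
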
 
\begin{proof} It is a consequence of  
Minkowski's inequality (see also 
\cite[Theorem 2.11.24]{HLP}).
\end{proof}
Let $\mathbb{B}=(B_k)_k$ be as in
(\ref{semivariation}), set ${\mathcal E}_k= 
\chi_{B_k}$, $k \in \mathbb{N}.$\\

\noindent For $1 \leq p \leq \infty$, let us define a norm  
on $L^1[\mu]$ by setting 
\begin{eqnarray}\label{KSpspace}
\|f\|_{K{S^p}[\mu]} = \left\{
\begin{array}{ll} \displaystyle{
\sup_{x^{\prime} \in X^{\prime}, 
\|x^{\prime}\|\leq 1} 
\left\{  
\left[ \sum_{k=1}^{\infty} \eta_k \left| 
(L)\int_{T}{\mathcal E}_k(t)
f(t)d|x^{\prime}\mu| \right|^p\right]^{1/p}
\right\} }& \text{if  } 1 \leq p < \infty, \\ \\ \displaystyle{
\sup_{x^{\prime} \in X^{\prime}, 
\|x^{\prime}\| \leq 1} \left[ \sup_{k \in \mathbb{N}} \left| (L)
\int_{T} {\mathcal E}_k(t)f(t)
d|x^{\prime}\mu|
\right| \right] }& \text{if  } p=\infty.
\end{array}\right.
\end{eqnarray}
The following inequality holds.
\begin{prop}\label{preliminar}
For any  
$f \in L^1[\mu]$  and $p \geq 1$ it is
\begin{eqnarray}\label{pinfty}
\|f\|_{KS^p[\mu]} \leq \|f\|_{KS^{\infty}[\mu]} .
\end{eqnarray}
\end{prop}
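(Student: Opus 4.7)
The plan is to reduce the inequality to a direct application of Proposition \ref{3.1}. Fix $f \in L^1[\mu]$ and fix an arbitrary $x' \in X'$ with $\|x'\| \leq 1$. For each $k \in \mathbb{N}$, set
\begin{eqnarray*}
a_k = a_k(x') = \left| (L) \int_T \mathcal{E}_k(t)\, f(t)\, d|x'\mu| \right|.
\end{eqnarray*}
First I would check the hypothesis $a := \sup_k a_k < +\infty$ of Proposition \ref{3.1}. Since $f \in L^1[\mu]$, the function $f$ is $|x'\mu|$-Lebesgue integrable, and hence $\chi_{B_k} f$ is as well (by Proposition \ref{solid} or a direct $|x'\mu|$-Lebesgue argument). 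Therefore
\begin{eqnarray*}
a_k \leq (L) \int_T |f(t)|\, d|x'\mu| < +\infty,
\end{eqnarray*}
so $\sup_k a_k$ is finite.

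Next, applying Proposition \ref{3.1} with this sequence $(a_k)_k$ and the given $(\eta_k)_k$ (which satisfies (\ref{series})), I obtain
\begin{eqnarray*}
\left( \sum_{k=1}^{\infty} \eta_k\, a_k(x')^p \right)^{1/p} \leq \sup_{k \in \mathbb{N}} a_k(x')
= \sup_{k \in \mathbb{N}} \left| (L) \int_T \mathcal{E}_k(t)\, f(t)\, d|x'\mu| \right|.
\end{eqnarray*}

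Finally, since the right-hand side is bounded above by $\|f\|_{KS^{\infty}[\mu]}$ for every admissible $x'$, taking the supremum over all $x' \in X'$ with $\|x'\| \leq 1$ on the left yields $\|f\|_{KS^p[\mu]} \leq \|f\|_{KS^{\infty}[\mu]}$, as desired. There is no real obstacle here: the only point that deserves attention is the mild verification that $\sup_k a_k(x')$ is finite for each fixed $x'$, which is why the hypothesis $f \in L^1[\mu]$ (rather than merely $\Sigma$-measurability) is invoked in the statement.
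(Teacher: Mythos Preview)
Your proof is correct and follows essentially the same route as the paper's: fix $x'$, set $a_k=\left|(L)\int_T {\mathcal E}_k f\,d|x'\mu|\right|$, apply Proposition~\ref{3.1}, and then take the supremum over $x'$. The only difference is cosmetic---you explicitly verify the hypothesis $\sup_k a_k<+\infty$ of Proposition~\ref{3.1}, which the paper leaves implicit.
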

\begin{proof}
By (\ref{used}) used with
\begin{eqnarray} \label{ak}
a_k = \left| (L) \int_{T}{\mathcal E}_k(t)
f(t) d|x^{\prime} \mu |(t)\right|, 
\end{eqnarray}
where 
$x^{\prime}$ is a fixed element of $X^{\prime}$ with
$\|x^{\prime}\|\leq 1$, we have
\begin{eqnarray}\label{index}
\left( \sum_{k=1}^{\infty} 
\eta_k \left| (L) \int_{T}{\mathcal E}_k(t)
f(t) d|x^{\prime} \mu |(t)\right|^p \right)^{1/p}
\leq \sup_{k \in \mathbb{N}} \left| (L)
\int_{T} {\mathcal E}_k(t)f(t)
d|x^{\prime}\mu|
\right|.
\end{eqnarray} Taking the supremum in (\ref{index})
as $x^{\prime} \in X^{\prime}$, 
$\|x^{\prime}\|\leq 1$, we obtain
\begin{eqnarray*}
\|f\|_{KS^p[\mu]}&=&
\sup_{x^{\prime} \in X^{\prime}, 
\|x^{\prime}\|\leq 1} 
\left\{ 
\left[ \sum_{k=1}^{\infty} \eta_k \left| 
(L)\int_{T}{\mathcal E}_k(t)
f(t)d|x^{\prime}\mu| \right|^p\right]^{1/p}
\right\}  \\ &\leq& 
\sup_{x^{\prime} \in X^{\prime}, 
\|x^{\prime}\| \leq 1} \left[ \sup_{k \in \mathbb{N}} \left| (L)
\int_{T} {\mathcal E}_k(t)f(t)
d|x^{\prime}\mu|
\right| \right] = \|f\|_{KS^{\infty}[\mu]},
\end{eqnarray*}
getting the assertion. \end{proof}

Now we prove that
\begin{thm}\label{norm}
The map $f \mapsto \|f\|_{KS^p[\mu]}$ defined 
in \rm (\ref{KSpspace}) \em is a norm.
\end{thm}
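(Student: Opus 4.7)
The plan is to verify in turn finiteness on $L^{1}[\mu]$, absolute homogeneity, the triangle inequality, and positive definiteness, treating the cases $1\le p<\infty$ and $p=\infty$ simultaneously wherever possible.

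First I would check that $\|f\|_{KS^{p}[\mu]}$ is finite for every $f\in L^{1}[\mu]$. Since $|\mathcal{E}_{k}|\leq 1$ and $\|\mu\|(B_{k})\leq M$, for each $x'\in X'$ with $\|x'\|\leq 1$ one has
\begin{eqnarray*}
\Bigl|(L)\int_{T}\mathcal{E}_{k}(t)f(t)\,d|x'\mu|\Bigr|\;\leq\;(L)\int_{T}|f(t)|\,d|x'\mu|\;\leq\;\|f\|_{L^{1}[\mu]},
\end{eqnarray*}
so $\|f\|_{KS^{\infty}[\mu]}\leq\|f\|_{L^{1}[\mu]}<+\infty$, and Proposition \ref{preliminar} then yields finiteness for every $p\geq 1$. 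Non-negativity is immediate, and absolute homogeneity follows from the $\mathbb{R}$-linearity of the Lebesgue integral with respect to $|x'\mu|$, since the scalar $\alpha$ pulls out of each $a_{k}$ in \eqref{ak} and out of the supremum over $x'$.

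For the triangle inequality, fix $x'\in X'$ with $\|x'\|\leq 1$ and set $b_{k}=(L)\int_{T}\mathcal{E}_{k}f\,d|x'\mu|$, $c_{k}=(L)\int_{T}\mathcal{E}_{k}g\,d|x'\mu|$. Linearity of the integral gives $b_{k}+c_{k}=(L)\int_{T}\mathcal{E}_{k}(f+g)\,d|x'\mu|$, so Proposition \ref{minkowski} applied to these sequences yields, for $1\leq p<\infty$,
\begin{eqnarray*}
\Bigl(\sum_{k=1}^{\infty}\eta_{k}|b_{k}+c_{k}|^{p}\Bigr)^{1/p}\;\leq\;\Bigl(\sum_{k=1}^{\infty}\eta_{k}|b_{k}|^{p}\Bigr)^{1/p}+\Bigl(\sum_{k=1}^{\infty}\eta_{k}|c_{k}|^{p}\Bigr)^{1/p}\;\leq\;\|f\|_{KS^{p}[\mu]}+\|g\|_{KS^{p}[\mu]};
\end{eqnarray*}
taking the supremum over such $x'$ on the left gives subadditivity. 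The case $p=\infty$ is handled analogously, replacing the weighted $\ell^{p}$ norm by $\sup_{k}$ and using $\sup_{k}|b_{k}+c_{k}|\leq\sup_{k}|b_{k}|+\sup_{k}|c_{k}|$.

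The delicate step, which I expect to be the main obstacle, is positive definiteness: if $\|f\|_{KS^{p}[\mu]}=0$ then $f=0$ $\mu$-a.e. Since $\eta_{k}>0$ for every $k$ (by the hypothesis in Proposition \ref{minkowski}), the vanishing of the weighted $\ell^{p}$-norm, or of the supremum when $p=\infty$, forces
\begin{eqnarray*}
(L)\int_{B_{k}}f\,d|x'\mu|\;=\;0\qquad\text{for every }k\in\mathbb{N}\text{ and every }x'\in X'\text{ with }\|x'\|\leq 1.
\end{eqnarray*}
To propagate this to arbitrary $A\in\Sigma$, I would exploit the $\mu$-density \eqref{sep}: given $A\in\Sigma$ and $\varepsilon>0$, choose $B_{k_{0}}$ with $\|\mu\|(A\Delta B_{k_{0}})\leq\varepsilon$, so that $|x'\mu|(A\Delta B_{k_{0}})\leq\varepsilon$ uniformly in $x'$ with $\|x'\|\leq 1$. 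The absolute continuity of the Lebesgue integral of $f\in L^{1}(|x'\mu|)$ then gives
\begin{eqnarray*}
\Bigl|(L)\int_{A}f\,d|x'\mu|\Bigr|\;=\;\Bigl|(L)\int_{A}f\,d|x'\mu|-(L)\int_{B_{k_{0}}}f\,d|x'\mu|\Bigr|\;\leq\;(L)\int_{A\Delta B_{k_{0}}}|f|\,d|x'\mu|,
\end{eqnarray*}
which tends to $0$ with $\varepsilon$. Hence $(L)\int_{A}f\,d|x'\mu|=0$ for every $A\in\Sigma$, and by the standard Lebesgue criterion (as quoted from \cite[Theorem 9.5]{GORDON}, \cite[Example 3.1.1]{MCLEOD}, used here in the Lebesgue case) $f=0$ $|x'\mu|$-a.e. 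Applying this for all $x'\in X'$ with $\|x'\|\leq 1$, and using that the $\|\mu\|$-null sets are exactly the sets that are $|x'\mu|$-null for every such $x'$, one obtains $f=0$ $\mu$-a.e., i.e.\ $f$ is the zero element of $L^{1}[\mu]$.
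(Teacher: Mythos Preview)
Your proof is correct. The verification of finiteness, homogeneity, and the triangle inequality matches the paper's treatment (the paper handles the triangle inequality via the same Minkowski lemma and leaves the remaining axioms as routine).

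Where you diverge is in the positive-definiteness step. The paper argues by contradiction: assuming $f\neq 0$ $\mu$-a.e., it locates a level set $\overline{B}=E^{+}_{\overline{n}}$ on which $f^{*}=\min\{f,1\}$ is bounded below by $1/(\overline{n}+1)$, picks $x'_{0}$ via Hahn--Banach with $|x'_{0}\mu|(\overline{B})>0$, and then uses separability to find $B_{k_{0}}\in\mathbb{B}$ so close to $\overline{B}$ that $\bigl|(L)\int_{B_{k_{0}}}f\,d|x'_{0}\mu|\bigr|>0$, contradicting the vanishing of all the $a_{k}$. Your route is instead to pass from $\int_{B_{k}}f\,d|x'\mu|=0$ to $\int_{A}f\,d|x'\mu|=0$ for every $A\in\Sigma$ by absolute continuity of the scalar Lebesgue integral together with $\mu$-density of $\mathbb{B}$, and then invoke the classical criterion that a function with zero integral over every measurable set vanishes a.e. Both are sound; your argument is cleaner and avoids the ad hoc truncation and level-set decomposition, while the paper's argument is more self-contained and does not appeal to absolute continuity. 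One point worth making explicit in your write-up: the passage from ``$f=0$ $|x'\mu|$-a.e.\ for each $x'$'' to ``$f=0$ $\mu$-a.e.'' works because the single measurable set $E=\{f\neq 0\}$ satisfies $|x'\mu|(E)=0$ for all $x'$ with $\|x'\|\leq 1$, hence $\|\mu\|(E)=0$ by \eqref{domination}.
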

\begin{proof}
Observe that, by definition, 
$\|f\|_{KS^p[\mu]} \geq 0$ 
for every $f \in L^1[\mu]$. 
Let $f \in L^1[\mu]$ with
$\|f\|_{KS^p[\mu]}=0$. We prove that $f=0$
$\mu$-almost everywhere. It is enough
to take $1 \leq p < \infty$, since the case 
$p=\infty$ will follow from (\ref{pinfty}).
For $k \in \mathbb{N}$, let $a_k$ be as in (\ref{ak}).
As the $\eta_k$'s are strictly positive, from
\begin{eqnarray*}\label{zero}
\displaystyle{ \left(\sum_{k=1}^{\infty} \eta_k \, a_k^p
\right)^{1/p}} =0
\end{eqnarray*}  it follows that $a_k=0$ for every 
$k \in \mathbb{N}$. Hence,
\begin{eqnarray}\label{albaicin1}
\left|(L) \int_{T}{\mathcal E}_k(t)
f(t) \, d|x^{\prime} \mu |(t)\right|=0 \quad 
\text{for each   } k \in \mathbb{N} \text{  and   }
x^{\prime}\in X^{\prime} \text{  with  }\|x^{\prime}\| \leq 1.
\end{eqnarray}
Proceeding by contradiction, suppose that $f\neq 0$ 
$\mu $-almost everywhere. If
$E^+=f^{-1}(]0,+\infty[)$, $E^-=f^{-1}(]-\infty,0[)$, 
then $E^+$, $E^- \in \Sigma$, since $f$ is
$\Sigma$-measurable, and we have 
$\mu(E^+) \neq 0$ or $\mu(E^-) \neq 0$.
Suppose that $\mu(E^+) \neq 0$.
By the Hahn-Banach theorem, there is
$x^{\prime}_0 \in X^{\prime}$ with  
$\|x^{\prime}_0\| \leq 1$, $x^{\prime}_0\, \mu(E^+) 
\neq 0$, and hence $|x^{\prime}_0\, \mu(E^+)| 
> 0$. Moreover, if $f^*(t)=\min
\{f(t),1\}$, $t \in T$, then  
$E^+=\{t \in T: f^*(t)> 0\}$.  
For each $n \in \mathbb{N}$, set
\begin{eqnarray}
E_n^+=\left\{t \in T: 
\dfrac{1}{n+1} < f^*(t) \leq \dfrac{1}{n} \right\}.
\end{eqnarray} 
Since $\displaystyle{
E^+=\bigcup_{n=1}^{\infty} E^+_n}$ and $ x^{\prime}_0 \mu$ is 
$\sigma$-additive, there is $\overline{n} \in \mathbb{N}$
with $|x^{\prime}_0 \mu|(E^+_{\overline{n}}) >0$.
Put $\overline{B} =
E^+_{\overline{n}}$, and choose $\overline{\varepsilon}$ such that 
\begin{eqnarray}\label{epsilon}
0 < \overline{\varepsilon}
< \min\left\{ \dfrac{1}{\overline{n}+1} 
|x^{\prime}_0 \mu|(\overline{B}), 1 \right\}.
\end{eqnarray}
By the separability of $\mu$, in correspondence with
$\overline{\varepsilon}$ 
and $\overline{B}$ there is $B_{k_0} \in 
\mathbb{B}$ satisfying (\ref{sep}), that is 
\begin{eqnarray}\label{sep2}
\|\mu\|(\overline{B} \Delta B_{k_0})=
\sup_{x^{\prime} \in X^{\prime}, 
\|x^{\prime}\| \leq 1}
[\,|x^{\prime} \mu |(\overline{B} \Delta B_{k_0}) ] \leq \overline{\varepsilon}.
\end{eqnarray}
From (\ref{epsilon}) and (\ref{sep2}) we deduce
\begin{eqnarray*}\label{albaicinfinite}
\|\mu\|(B_{k_0}) \leq \|\mu\|(\overline{B}) + 
\|\mu\|(\overline{B} \Delta B_{k_0}) < \|\mu\|(T) + 1= M,
\end{eqnarray*} 
so that $B_{k_0} \in \mathbb{B}$, and 
\begin{eqnarray}\label{albaicinfin} \nonumber
\left|(L) \int_{T}\chi_{B_{k_0}}(t)
f(t) \, d|x_0^{\prime} \mu |(t)\right| &\geq&
(L) \int_{T}{\mathcal E}_{k_0}(t)
f(t) \,  d|x^{\prime}_0 \mu |(t) \\ &=& \nonumber
(L) \int_{B_{k_0}}
f(t) \,  d|x^{\prime}_0 \mu |(t) \geq 
(L) \int_{B_{k_0}}
f^*(t) \,  d|x^{\prime}_0 \mu |(t) \geq \\ &\geq&  
(L) \int_{\overline{B}}
f^*(t) \,  d|x^{\prime}_0 \mu |(t) -
(L) \int_{\overline{B}\Delta B_{k_0}}
f^*(t) \,  d|x^{\prime}_0 \mu |(t) \geq \\ &\geq&
\dfrac{1}{\overline{n}+1}  \nonumber
|x^{\prime}_0 \mu|(\overline{B}) - 
|x^{\prime} \mu |(\overline{B} \Delta B_{k_0})\geq \\ 
&\geq& \dfrac{1}{\overline{n}+1} 
|x^{\prime}_0 \mu|(\overline{B}) - 
\overline{\varepsilon} >0, \nonumber
\end{eqnarray}
which contradicts  
(\ref{albaicin1}). Therefore, $\mu(E^+)=0$.

Now, suppose that $\mu(E^-)\neq 0$. By proceeding 
analogously as in (\ref{albaicinfin}), replacing $f$
with $-f$ and $f^*$ with the function $f_*$
defined by $f_*(t)=
\min \{-f(t), 1 \}$, $t \in T$, we find an $x^{\prime}_1 \in
X^{\prime}$ with $\|x^{\prime}_1\| \leq 1$, an 
$\overline{\overline{n}} \in \mathbb{N}$,  
a $\overline{\overline{B}} \in \Sigma$, 
an $\overline{\overline{\varepsilon}} >0$ and 
a $B_{k_1} \in \mathbb{B}$ with
$\|\mu\|(B_{k_1})<M$, and 
$$\left|(L) \int_T \chi_{B_{k_1}}(t)
f(t) \, d|x_1^{\prime} \mu |(t)\right| \geq 
(L) \int_{B_{k_1}}
f_*(t) \,  d|x^{\prime}_1 \mu |(t)\geq
\dfrac{1}{\overline{\overline{n}}+1} 
|x^{\prime}_1 \mu|(\overline{\overline{B}}) - 
\overline{\overline{\varepsilon}} >0, 
$$ getting again a contradiction with 
(\ref{albaicin1}).
Thus, $\mu(E^-)= 0$, and $f=0$ 
almost everywhere. 

The triangular property of the norm can be  
deduced from Proposition \ref{minkowski} for
$1 \leq p < \infty$ and is not difficult to see for 
$p=\infty$,
and the other properties are easy to check.
\end{proof}
For $1 \leq p \leq \infty$, the \emph{Kuelbs-Steadman space} 
$K{S^p}[\mu]$ (resp. $K{S_w^p}[\mu]$) is
the completion of $L^1[\mu] $ (resp. $L_w^1[\mu]$) 
with respect to the norm 
defined in (\ref{KSpspace}) 
(see also \cite{BCSLp, AFP, GZ, KHM, KUELBS, S}). 
Observe that, to avoid ambiguity, we take the completion of
$L^1[\mu] $ rather than that of $L^p[\mu] $, but since the 
embeddings in Theorem \ref{embedding}
are continuous and dense, the two 
methods are substantially equivalent.

By proceeding similarly as in \cite[Theorem 3.26]{GZ},
we prove the following relations between 
the spaces $L^q[\mu]$ and $K{S^p}[\mu]$.
   \begin{thm}\label{embedding}
   For every $p$, $q$ with $1 \leq p \leq \infty$ and
$1 \leq q \leq \infty$, it is $L^q[\m] \subset K{S^p}[\mu]$ continuously and densely.
Moreover, the space of all $\Sigma$-simple functions is
dense in $K{S^p}[\mu]$.
   \end{thm}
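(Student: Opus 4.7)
The plan is to establish the continuity of the embedding via a H\"older-type estimate, and then derive the density statement from the density of $\Sigma$-simple functions.

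For continuity, I would fix $f \in L^q[\mu]$ and $x^{\prime} \in X^{\prime}$ with $\|x^{\prime}\| \leq 1$. For $1 \leq q < \infty$, applying H\"older's inequality to the scalar measure $|x^{\prime}\mu|$ inside the integral defining the $KS^p$-norm gives
$$\left|(L)\int_T {\mathcal E}_k(t)\,f(t)\,d|x^{\prime}\mu|\right| \leq |x^{\prime}\mu|(B_k)^{1/q'}\left((L)\int_T |f|^q\,d|x^{\prime}\mu|\right)^{1/q} \leq M^{1/q'}\,\|f\|_{L^q[\mu]},$$
where $q'$ is the conjugate exponent of $q$ and the bound $|x^{\prime}\mu|(B_k) \leq \|\mu\|(B_k) \leq M$ comes from (\ref{semivariation}). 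The case $q = \infty$ is handled by bounding the integrand by $|x^{\prime}\mu|(B_k)$ times the $|x^{\prime}\mu|$-essential supremum of $|f|$, yielding at most $M\,\|f\|_{L^{\infty}[\mu]}$. Taking the supremum over $k$ and $x^{\prime}$ produces $\|f\|_{KS^{\infty}[\mu]} \leq C_q\,\|f\|_{L^q[\mu]}$ for a constant $C_q$ depending on $q$, and Proposition \ref{preliminar} upgrades this to $\|f\|_{KS^p[\mu]} \leq C_q\,\|f\|_{L^q[\mu]}$ for every $p \geq 1$, which is the continuous embedding.

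For density, I would first prove that $\Sigma$-simple functions are dense in $KS^p[\mu]$, from which the density of every $L^q[\mu]$ follows because simple functions sit inside every $L^q[\mu]$ (they are bounded and $(KL)$ $\mu$-integrable by Proposition \ref{simple}). Since $KS^p[\mu]$ is by construction the completion of $L^1[\mu]$ under $\|\cdot\|_{KS^p[\mu]}$, it suffices to approximate each $f \in L^1[\mu]$ by simple functions in the $KS^p[\mu]$-norm. Applying the $q = 1$ case of the continuity estimate, namely $\|\cdot\|_{KS^p[\mu]} \leq \|\cdot\|_{L^1[\mu]}$, together with the standard density of $\Sigma$-simple functions in $L^1[\mu]$ with respect to $\|\cdot\|_{L^1[\mu]}$ (a classical fact in Kluv\'anek--Lewis integration; see e.g. \cite{OKADA, CURBERA, PANCHA}), every $f\in L^1[\mu]$ is approximated in the weaker $KS^p$-norm by simple functions, which gives the desired density.

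The main obstacle is precisely the cited density of simple functions in $L^1[\mu]$ for a Banach space-valued measure. Unlike the scalar case, this relies on the Kluv\'anek--Lewis construction of $(KL)$-integrability together with the absolute continuity of the indefinite integral relative to the semivariation $\|\mu\|$; this is the one genuinely nontrivial ingredient. Once it is invoked, the remainder of the argument consists of the chain of inequalities above and an appeal to Proposition \ref{preliminar}.
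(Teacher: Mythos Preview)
Your proposal is correct and follows essentially the same approach as the paper: a H\"older/Jensen estimate on each term $\bigl|\int_T \mathcal{E}_k f\, d|x'\mu|\bigr|$ combined with the weighted-sum bound of Proposition~\ref{3.1} (which you invoke in the equivalent form of Proposition~\ref{preliminar}) for continuity, and then density of $\Sigma$-simple functions in $L^1[\mu]$ (the paper cites \cite[Corollary 2.1.10]{PANCHA}) transferred to $KS^p[\mu]$ via the continuity estimate with $q=1$. The only cosmetic difference is that you route through $\|f\|_{KS^\infty[\mu]}$ and then apply Proposition~\ref{preliminar}, whereas the paper applies Proposition~\ref{3.1} directly inside the $p$-sum; the constants and logic are otherwise identical.
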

   \begin{proof} 
We first consider the case $1 \leq p < \infty$.
Let $ f \in L^q[\mu] $, with $1 \leq q < \infty$, and $M$ be
as in (\ref{semivariation}). Note that $M^{\frac{q-1}{q}} \leq M$, 
since $M \geq 1$.
 As $|{\mathcal E}_k(t)|= {\mathcal E}_k(t)
\leq 1$ and $|{\mathcal E}_k(t)|^q\leq {\mathcal E}_k(t)$
for any $k \in \mathbb{N}$ and $t \in T$, taking into account (\ref{used}) 
and Jensen's inequality (see also \cite[Exercise 4.9]{BREZIS}), we deduce
\begin{eqnarray}\label{imbedding} \nonumber
\|f\|_{K{S^p}[\mu]} &=& \sup_{x^{\prime} \in X^{\prime},  
\|x^{\prime}\|\leq 1} 
\left\{ 
\left[ \sum_{k=1}^{\infty} \eta_k \left|
(L)\int_{T}{\mathcal E}_k(t)
f(t)d|x^{\prime}\mu| \right|^{\frac{pq}{q}}\right]^{1/p}
\right\}\\&\leq& \sup_{x^{\prime} \in X^{\prime}, 
\|x^{\prime}\|\leq 1} \left\{ \left[
\sum_{k=1}^{\infty} \eta_k \left( (|x^{\prime} \mu|(B_k))^{q-1} \, \cdot
(L)\int_{T}{\mathcal E}_k(t)|f(t)|^q 
d|x^{\prime}\mu| \right)^{p/q}\right]^{1/p} \right\} 
\\&\leq& M^{\frac{q-1}{q}} 
\sup_{x^{\prime} \in X^{\prime}, 
\|x^{\prime}\| \leq 1} \left[
\sup_{k \in \mathbb{N}} \left( (L)
\int_{T}{\mathcal E}_k(t)|f(t)|^q 
d|x^{\prime}\mu|
\right)^{1/q} \right]  \nonumber
\\ &\leq& M \, \sup_{x^{\prime} \in X^{\prime}, 
\|x^{\prime}\| \leq 1} \left[ \left( (L)
\int_{T}|f(t)|^q 
d|x^{\prime}\mu|
\right)^{1/q} \right] = M \, \,
\|f\|_{L^q[\m]}, \nonumber
\end{eqnarray} 
where $M$ is as in (\ref{semivariation}).
Now, let $1 \leq p < \infty$ and $q = \infty$. We have
\begin{eqnarray}\label{imbeddingqinfty} \nonumber
\|f\|_{K{S^p}[\mu]} &=& \sup_{x^{\prime} \in X^{\prime},  
\|x^{\prime}\|\leq 1} 
\left\{ 
\left[ \sum_{k=1}^{\infty} \eta_k \left|
(L)\int_{T}{\mathcal E}_k(t)
f(t)d|x^{\prime}\mu| \right|^p\right]^{1/p}
\right\}\\ &\leq& \sup_{x^{\prime} \in X^{\prime},  
\|x^{\prime}\|\leq 1} [ (|x^{\prime}\mu|(B_k))^p \cdot
\textrm{ess sup} |f|^p]^{1/p} \leq M \cdot
\|f\|_{L^{\infty}[\mu]}.
\end{eqnarray}
The proof of the
case $p=\infty$ is analogous to that of the case
$1 \leq p < \infty$.
Therefore, $ f \in K{S^p}[\mu]$, and the embeddings in
(\ref{imbedding}) and (\ref{imbeddingqinfty})
are continuous. 

Moreover, observe that every $\Sigma$-simple function  
belongs to $L^q[\mu]$ and the space of all 
$\Sigma$-simple functions is dense in $L^1[\mu]$
with respect to $\|\cdot\|_{L^1[\mu]}$ 
(see also \cite[Corollary 2.1.10]{PANCHA}).
Moreover, since $K{S^p}[\mu]$ is the completion of 
$L^1[\mu]$ with respect to the norm 
$\|\cdot\|_{K{S^p}[\mu]}$, the space $L^1[\mu]$ is dense 
in $K{S^p}[\mu]$ with respect to the 
norm $\|\cdot\|_{K{S^p}[\mu]}$ (see also 
\cite[\S4.4]{KA}).  

Choose arbitrarily $\varepsilon >0$ and $f \in 
K{S^p}[\mu]$. There is $g \in L^1[\mu]$ with 
$\|g-f\|_{K{S^p}[\mu]}\leq \dfrac{\varepsilon}{M+1}$.
Moreover, in correspondence with $\varepsilon$ and $g$
we find a $\Sigma$-simple function $s$, with  
$\|s-g\|_{L^1[\mu]}\leq 
\dfrac{\varepsilon}{M+1}$.
By (\ref{imbedding}) and (\ref{imbeddingqinfty}),
$\|\cdot\|_{K{S^p}[\mu]} \leq 
M \|\cdot\|_{L^1[\mu]}$, and hence
we obtain
\begin{eqnarray*}
\|s-f\|_{K{S^p}[\mu]} &\leq&   
\|s-g\|_{K{S^p}[\mu]} + \|g-f\|_{K{S^p}[\mu]}
 \\ &\leq& M
\|s-g\|_{L^1[\mu]} + \|g-f\|_{K{S^p}[\mu]} 
\leq  \dfrac{M \varepsilon}{M+1}
+ \dfrac{\varepsilon}{M+1}=\varepsilon ,
\end{eqnarray*}
getting the last part of the assertion. 
Thus, the embeddings in
(\ref{imbedding}) and (\ref{imbeddingqinfty}) are dense.
\end{proof} 
\begin{prop}\label{pinf}
${K{S^{\infty}}[\mu]}  \subset {K{S^p}[\mu]}$ for
every $p \geq 1$.
\end{prop}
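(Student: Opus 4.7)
The plan is to deduce this inclusion directly from the norm comparison already established in Proposition \ref{preliminar}, namely $\|f\|_{KS^p[\mu]} \leq \|f\|_{KS^\infty[\mu]}$ for every $f \in L^1[\mu]$ and every $p \geq 1$. This inequality says that the $KS^\infty$-norm dominates the $KS^p$-norm pointwise on the common dense subspace $L^1[\mu]$, so the $KS^\infty$-topology on $L^1[\mu]$ is finer than the $KS^p$-topology.

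First, I would pick an arbitrary $f \in KS^\infty[\mu]$ and represent it (by definition of completion) as the limit of a $\|\cdot\|_{KS^\infty[\mu]}$-Cauchy sequence $(f_n)_n \subset L^1[\mu]$. Applying Proposition \ref{preliminar} to the differences $f_n-f_m$ yields
\begin{eqnarray*}
\|f_n-f_m\|_{KS^p[\mu]} \leq \|f_n-f_m\|_{KS^\infty[\mu]},
\end{eqnarray*}
so $(f_n)_n$ is also $\|\cdot\|_{KS^p[\mu]}$-Cauchy in $L^1[\mu]$. Since $KS^p[\mu]$ is the completion of $L^1[\mu]$ with respect to $\|\cdot\|_{KS^p[\mu]}$, there is a unique element $\widetilde{f} \in KS^p[\mu]$ with $f_n \to \widetilde{f}$ in $KS^p[\mu]$. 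Next, I would check that the assignment $f \mapsto \widetilde{f}$ does not depend on the choice of representing sequence: if $(f_n)_n$ and $(g_n)_n$ both represent $f$ in $KS^\infty[\mu]$, then $\|f_n-g_n\|_{KS^\infty[\mu]} \to 0$, hence by Proposition \ref{preliminar} also $\|f_n-g_n\|_{KS^p[\mu]} \to 0$, so their $KS^p$-limits coincide. The map $J:KS^\infty[\mu] \to KS^p[\mu]$ so defined is clearly linear, extends the identity on $L^1[\mu]$, and satisfies $\|J(f)\|_{KS^p[\mu]} \leq \|f\|_{KS^\infty[\mu]}$, giving a continuous embedding.

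The main (and really the only) subtlety is to justify reading the symbol ``$\subset$'' as a genuine inclusion, which requires $J$ to be injective. I would handle this by identifying both completions with subspaces of a common ambient space of functionals of the variables $(x^{\prime},k)$: each $f \in L^1[\mu]$ is already encoded by the family
\begin{eqnarray*}
\Phi(f) = \Bigl((L)\int_T {\mathcal E}_k(t) f(t)\, d|x^{\prime}\mu|\Bigr)_{\|x^{\prime}\| \leq 1,\, k \in \mathbb{N}},
\end{eqnarray*}
and both $\|\cdot\|_{KS^\infty[\mu]}$ and $\|\cdot\|_{KS^p[\mu]}$ are mixed sup/$\ell^p$-norms of this same family. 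Thus the completions can be realized as closures inside this fixed ambient space, and the element of $KS^\infty[\mu]$ represented by $(f_n)_n$ corresponds to the pointwise limit of $\Phi(f_n)$, which determines $J(f)$ uniquely; in particular, $J(f) = 0$ forces $\Phi(f_n) \to 0$ componentwise, which combined with $KS^\infty$-Cauchyness forces $\|f_n\|_{KS^\infty[\mu]} \to 0$, i.e.\ $f = 0$ in $KS^\infty[\mu]$. I expect this injectivity verification to be the only nontrivial step; the norm inequality itself does all the real work.
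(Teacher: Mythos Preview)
Your proposal is correct and follows essentially the same route as the paper: the paper's proof is a one-line appeal to the norm comparison (\ref{pinfty}) together with the fact that both spaces are completions of $L^1[\mu]$, which is precisely the argument you unfold in detail. Your explicit injectivity check (via componentwise limits of the integrals combined with $KS^\infty$-Cauchyness) is a point the paper leaves implicit, but it is sound and the overall strategy is the same.
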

\begin{proof} The assertion follows from (\ref{pinfty}),
since ${K{S^p}[\mu]}$ (resp.
${K{S^{\infty}}[\mu]})$ is the completion of 
$L^1[\mu]$ with respect to $\|f\|_{{K{S^p}[\mu]}}$
(resp. $\|f\|_{{K{S^{\infty}}[\mu]}}$).
\end{proof}
\begin{rem}\label{embeddingw}\rm
~ 
(a) Notice that, for $q \neq \infty$,
by Theorem \ref{embedding} and Proposition \ref{pinf}
hold also when 
$L^q[\mu]$ and $K{S^p}[\mu]$ are replaced by 
$L_w^q[\mu]$ and $K{S_w^p}[\mu]$, respectively. 

(b) If $f$ is $(HKL)$-integrable, 
then for each $x^{\prime} \in X^{\prime}$ and  
$k \in \mathbb{N}$, ${\mathcal E}_k f$ is 
both Henstock-Kurzweil and Lebesgue integrable
with respect to $|x^{\prime} \mu|$,
since $f$ is $\Sigma$-measurable, 
and the two integrals coincide, thanks to the 
$(HK)$-integrability of
the characteristic function $\chi_E$ for each $E \in \Sigma$
and the monotone convergence theorem (see also
\cite{BRV, RIECAN}). Thus, taking into account 
(\ref{index}), for every $p$ with $1 \leq p < \infty$ we have
 \begin{eqnarray*}
& & 
\sup_{x^{\prime} \in X^{\prime}, 
\|x^{\prime}\|\leq 1} 
\left[ \left(\sum_{k=1}^{\infty} \eta_k
\left|(L)\int_{T}{\mathcal E}_k(t)f(t)\,d|x^{\prime}\mu |   
\right|^p\right)^{1/p} \right]   \\ &\leq&
\sup_{x^{\prime} \in X^{\prime}, 
\|x^{\prime}\|\leq 1} \left( \sup_{k\in\mathbb{N}} 
\left| (L)\int_{T}
{\mathcal E}_k(t)f(t)\,d|x^{\prime}\mu |   
\right|\right) \\ &=& 
\sup_{x^{\prime} \in X^{\prime}, 
\|x^{\prime}\| \leq 1} \left( \sup_{k\in\mathbb{N}} 
\left| (HK)\int_{T}{\mathcal E}_k(t)
f(t)\,d|x^{\prime}\mu |   
\right|\right)
\\ &=& 
\sup_{x^{\prime} \in X^{\prime}, 
\|x^{\prime}\| \leq 1} \left( \sup_{k\in\mathbb{N}} 
\left| (HK)\int_{B_k} f(t)\,d|x^{\prime}\mu |   
\right|\right) \\&\leq& \sup_{x^{\prime} \in X^{\prime}, 
\|x^{\prime}\| \leq 1} \left( \sup_{A \in \Sigma} 
\left| (HK) \int_A f(t) \, d|x^{\prime} \mu| \, \right| \, \right)
= \|f\|_{HKL}. \qquad \qquad \qed
\end{eqnarray*}
\end{rem}
The next result deals with the separability of 
Kuelbs-Steadman spaces, which holds even for $p=\infty$,
differently from $L^p$ spaces.
\begin{prop}
For $1 \leq p \leq \infty$, the space
$K{S^p}[\mu]$ is separable. 
\end{prop}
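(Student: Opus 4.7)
The plan is to produce an explicit countable dense subset of $KS^p[\mu]$, exploiting the $\mu$-density of the family $\mathbb{B}=(B_k)_k$ together with the continuous dense embedding of $L^1[\mu]$ into $KS^p[\mu]$ from Theorem \ref{embedding}. My candidate is the countable set
\[
\mathcal{D}= \left\{ \sum_{i=1}^n q_i \chi_{B_{k_i}} : n\in\mathbb{N},\ q_i\in\mathbb{Q},\ k_i\in\mathbb{N} \right\}.
\]

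First I would fix $f\in KS^p[\mu]$ and $\varepsilon>0$ and, using the density part of Theorem \ref{embedding}, choose a $\Sigma$-simple function $s=\sum_{i=1}^r \alpha_i \chi_{A_i}$ with $\|s-f\|_{KS^p[\mu]}<\varepsilon/2$. Since the continuous embedding $L^1[\mu]\hookrightarrow KS^p[\mu]$ has operator constant at most $M$ (see (\ref{imbedding}) with $q=1$, valid also for $p=\infty$), it suffices to approximate $s$ by an element of $\mathcal{D}$ in the $L^1[\mu]$-norm.

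Next, for each index $i$ I would pick a rational $q_i$ close to $\alpha_i$ and, invoking the $\Sigma$-separability of $\mu$, select $B_{k_i}\in\mathbb{B}$ with $\|\mu\|(A_i\Delta B_{k_i})$ as small as desired. The key identity is
\[
\|\chi_{A_i}-\chi_{B_{k_i}}\|_{L^1[\mu]}=\sup_{\|x'\|\leq 1} |x'\mu|(A_i\Delta B_{k_i})=\|\mu\|(A_i\Delta B_{k_i}),
\]
while $\|\chi_{B_{k_i}}\|_{L^1[\mu]}\leq\|\mu\|(B_{k_i})\leq M$ by (\ref{semivariation}). Setting $s'=\sum_{i=1}^r q_i \chi_{B_{k_i}}\in\mathcal{D}$ and splitting
\[
|\alpha_i\chi_{A_i}-q_i\chi_{B_{k_i}}|\leq |\alpha_i|\,|\chi_{A_i}-\chi_{B_{k_i}}|+|\alpha_i-q_i|\,\chi_{B_{k_i}},
\]
I can control $\|s-s'\|_{L^1[\mu]}$ by choosing each $|\alpha_i-q_i|$ and each $\|\mu\|(A_i\Delta B_{k_i})$ smaller than a suitable constant of the form $\varepsilon/(2rM(|\alpha_i|+1))$. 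This gives $\|s-s'\|_{KS^p[\mu]}\leq M\|s-s'\|_{L^1[\mu]}<\varepsilon/2$, and the triangle inequality yields $\|s'-f\|_{KS^p[\mu]}<\varepsilon$.

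There is no real obstacle here: the only point requiring care is the simultaneous bookkeeping of the coefficient perturbation and the set perturbation, which is routine. The case $p=\infty$ needs no separate treatment, because Theorem \ref{embedding} supplies the continuous dense embedding of $L^1[\mu]$ uniformly for all $1\leq p\leq\infty$. Consequently $\mathcal{D}$ is a countable dense subset of $KS^p[\mu]$, proving separability.
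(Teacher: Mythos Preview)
Your argument is correct. Both your proof and the paper's proof follow the same two-step scheme: first approximate $f\in KS^p[\mu]$ by an element of $L^1[\mu]$ in the $KS^p$-norm (using the density part of Theorem \ref{embedding}), and then approximate that element by a member of a fixed countable set in the $L^1[\mu]$-norm, transferring back via the continuous embedding $\|\cdot\|_{KS^p[\mu]}\leq M\|\cdot\|_{L^1[\mu]}$. The difference lies entirely in the second step. The paper simply invokes the known equivalence between separability of $\mu$ and separability of $L^1[\mu]$ (citing \cite{AFP, RICKER}) and takes any countable $L^1[\mu]$-dense set $\mathcal{H}$. You instead construct the countable set explicitly as rational linear combinations of the $\chi_{B_k}$, and prove by hand---via the identity $\|\chi_A-\chi_B\|_{L^1[\mu]}=\|\mu\|(A\Delta B)$ and the $\mu$-density of $\mathbb{B}$---that this set is $L^1[\mu]$-dense in the simple functions. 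Your route is slightly longer but more self-contained and constructive, effectively reproving the separability of $L^1[\mu]$ rather than citing it; the paper's route is shorter but relies on an external reference for that fact.
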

\begin{proof} 
Observe that, by our assumptions, $\mu$ is separable, and 
this is equivalent to the
separability of the spaces $L^p[\mu]$ with $1\leq p<\infty$
(see also \cite[Proposition 2.3]{AFP},
\cite[Propositions 1A and 3]{RICKER}).

Now, let ${\mathcal H}=
\{h_n: n \in \mathbb{N} \}$ be a countable 
subset of $L^1$, dense in $L^1[\mu]$ with respect to
the norm $\|\cdot\|_{L^1[\mu]}$. By Theorem 
\ref{embedding}, ${\mathcal H} \subset K{S^p}[\mu]$.
We claim that ${\mathcal H}$ is dense in $K{S^p}[\mu]$.
Pick arbitrarily $\varepsilon >0$ and $f \in 
K{S^p}[\mu]$. There is $g \in L^1[\mu]$ with 
$\|g-f\|_{K{S^p}[\mu]}\leq 
\dfrac{\varepsilon}{M+1}$.
In correspondence with $\varepsilon$ and $g$
there exists  $n_0 \in \mathbb{N}$ such that  
$\|h_{n_0}-g\|_{L^1[\mu]}\leq 
\dfrac{\varepsilon}{M+1}$.
By (\ref{imbedding}),
$\|\cdot\|_{K{S^p}[\mu]} \leq M
\|\cdot\|_{L^1[\mu]}$,
and hence
\begin{eqnarray*}
\|h_{n_0}-f\|_{K{S^p}[\mu]} &\leq&   
\|h_{n_0}-g\|_{K{S^p}[\mu]} + \|g-f\|_{K{S^p}[\mu]}
 \\ &\leq& M \|h_{n_0}-g\|_{L^1[\mu]} + \|g-f\|_{K{S^p}[\mu]}
\leq \dfrac{M \varepsilon}{M+1}
+ \dfrac{\varepsilon}{M+1}=\varepsilon ,
\end{eqnarray*} 
getting the claim.
\end{proof}
Now we prove that $K{S^p}[\mu]$ spaces are Banach lattices
and K\"{o}the function spaces. First, we recall some properties
of such spaces (see also \cite{JT, MN}).

A partially ordered Banach space $X$ which is also a vector lattice is
a \emph{Banach lattice} if $\|x\| \leq \|y\|$ for every $x$,
$y \in X$ such that $|x|\leq |y|$.

A \emph{weak order unit} of $X$ is a positive element 
$e \in X$ such that, if $x \in X$ and
$x \wedge e =0$, then $x=0$.

Let $X$ be a Banach lattice and 
$\emptyset \neq A \subset B \subset X$.
We say that $A$ is \emph{solid} in $B$ if for each $x$, $y$ with
$x \in B$, $y \in A$ and $|x|\leq |y|$, it is $x\in A$.

Let $\lambda$ be an extended real-valued measure on  
$\Sigma$. A Banach space $X$ consisting of (classes of equivalence of)
$\lambda$-measurable functions is called a \emph{K\"{o}the
function space} with respect to $\lambda$
if, for every $g \in X$ and for each 
measurable function $f$ with $|f| \leq |g|$ 
$\lambda$-almost everywhere, it is $f \in X$ and $\|f\| \leq \|g\|$,
and $\chi_A \in X$
for every $A\in \Sigma$ with $\lambda(A)< + \infty$.
\begin{thm}
If $p \geq 1$, then $K{S^p}[\mu]$ is a Banach lattice 
with a weak order unit and 
a K\"{o}the function space with respect to a control
measure $\lambda$ of $\mu$.
\end{thm}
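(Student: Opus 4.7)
The plan is to transfer the natural pointwise lattice structure from $L^1[\mu]$ to its completion $KS^p[\mu]$, using a control measure. First I would invoke Rybakov's theorem to pick $x^{*} \in X^{\prime}$ with $\|x^{*}\|\leq 1$ such that $\lambda := |x^{*}\mu|$ is a (finite) control measure for $\mu$, so that $\mu$-null and $\lambda$-null sets coincide, and $\lambda(T)\leq \|\mu\|(T) < \infty$ by Remark~\ref{finitesemivariation}. On $L^1[\mu]$ I would define $f\leq g$ to mean $f(t)\leq g(t)$ for $\lambda$-almost every $t \in T$, and equip $L^1[\mu]$ with the pointwise lattice operations $f\vee g$, $f\wedge g$, $|f|$, which respect $\lambda$-equivalence classes.

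To extend this structure to $KS^p[\mu]$ and verify the Banach lattice axiom, namely $|f|\leq |g|$ implies $\|f\|_{KS^p[\mu]}\leq \|g\|_{KS^p[\mu]}$, I would fix $x^{\prime} \in X^{\prime}$ with $\|x^{\prime}\|\leq 1$ and $k \in \mathbb{N}$, and use the elementary bound
\[
\left|(L)\int_T {\mathcal E}_k(t)\, f(t)\, d|x^{\prime}\mu|\right| \leq (L)\int_T {\mathcal E}_k(t)\, |f(t)|\, d|x^{\prime}\mu| \leq (L)\int_T {\mathcal E}_k(t)\, |g(t)|\, d|x^{\prime}\mu|.
\]
The delicate step is to relate this last expression to $|(L)\int_T {\mathcal E}_{k^{\prime}}(t)\, g(t)\, d|x^{\prime}\mu||$ for suitable indices. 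Using the $\mu$-density of $\mathbb{B}$, I would approximate the sets $B_k \cap \{g \geq 0\}$ and $B_k \cap \{g < 0\}$ by elements of $\mathbb{B}$, in the spirit of the construction in the proof of Theorem~\ref{norm}, so as to recover $\int_T {\mathcal E}_k\,|g|\, d|x^{\prime}\mu|$ as the sum of two quantities of the form $|\int_T {\mathcal E}_{k^{\pm}}\,g\, d|x^{\prime}\mu||$ up to an arbitrarily small error. Combining this with Proposition~\ref{minkowski} and taking the supremum over $x^{\prime}$ yields $\|f\|_{KS^p[\mu]} \leq \|g\|_{KS^p[\mu]}$ on the dense subspace $L^1[\mu]$, and the inequality propagates to the whole completion $KS^p[\mu]$ by continuity of the norm. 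The lattice operations, which are contractive thanks to this monotonicity, then extend to $KS^p[\mu]$ by density.

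Finally, I would take $e := \chi_T$ as the candidate weak order unit: it lies in $L^1[\mu]\subset KS^p[\mu]$ by Proposition~\ref{simple} and Theorem~\ref{embedding}, it is positive, and $f\wedge e = 0$ forces $|f|=0$ $\lambda$-a.e., hence $f=0$ in $KS^p[\mu]$. For the K\"{o}the function space property, the solidness condition reduces to the norm monotonicity established above, while $\chi_A \in KS^p[\mu]$ whenever $\lambda(A)<\infty$ follows because $\chi_A \in L^1[\mu] \subset KS^p[\mu]$ by Proposition~\ref{simple} and Theorem~\ref{embedding}. The hard part will be the monotonicity step itself: since $\|\cdot\|_{KS^p[\mu]}$ is built from $|\int {\mathcal E}_k f\, d|x^{\prime}\mu||$ rather than from $\int {\mathcal E}_k |f|\, d|x^{\prime}\mu|$, the sign inside the integral obstructs a direct termwise comparison and forces one to exploit the $\mu$-density of $\mathbb{B}$ together with a careful decomposition of $g$ into its positive and negative parts.
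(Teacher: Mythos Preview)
You have correctly isolated the crux that the paper itself glosses over: the norm in (\ref{KSpspace}) is built from $\left|\int_T \mathcal{E}_k f\, d|x'\mu|\right|$, not from $\int_T \mathcal{E}_k |f|\, d|x'\mu|$, so the pointwise bound $|f|\le|g|$ does not translate termwise into a bound between the two norms. The paper's own argument simply records inequality (\ref{monotonicity}) with $|f|$ and $|g|$ \emph{inside} the integrals and then asserts ``hence $\|f\|_{KS^p[\mu]}\le\|g\|_{KS^p[\mu]}$'' without bridging that gap; in this respect your proposal is more honest about where the real difficulty lies, and your use of Rybakov's theorem, the solidity of $L^1[\mu]$, and $\chi_T$ as weak order unit all match the paper's outline.

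Your proposed remedy, however, cannot close the gap. Approximating $B_k\cap\{g\ge 0\}$ and $B_k\cap\{g<0\}$ by sets $B_{k^+}, B_{k^-}\in\mathbb{B}$ yields at best
\[
\int_{B_k}|g|\,d|x'\mu|\ \le\ \left|\int_{B_{k^+}}g\,d|x'\mu|\right|+\left|\int_{B_{k^-}}g\,d|x'\mu|\right|+\varepsilon ,
\]
and when you feed this back into the weighted $\ell^p$-sum the terms on the right carry the weight $\eta_k$ rather than $\eta_{k^\pm}$; even for $p=\infty$ one only obtains $\|f\|_{KS^\infty[\mu]}\le 2\,\|g\|_{KS^\infty[\mu]}$. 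The sharp lattice inequality is in fact \emph{false} for the norm (\ref{KSpspace}): take $X=\mathbb{R}$, $T=\{1,2\}$, $\mu$ the counting measure, and let $\mathbb{B}$ enumerate $\{\{1\},\{2\},T,\emptyset\}$ (so (\ref{sep}) and (\ref{semivariation}) hold trivially). With $f=\chi_T$ and $g=\chi_{\{1\}}-\chi_{\{2\}}$ one has $|f|=|g|$ pointwise, yet $\|f\|_{KS^\infty[\mu]}=2$ while $\|g\|_{KS^\infty[\mu]}=1$; for finite $p$ the same pair gives $\|f\|_{KS^p[\mu]}^p-\|g\|_{KS^p[\mu]}^p=\eta_{k_0}\,2^p>0$ whenever $B_{k_0}=T$. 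Thus neither your decomposition strategy nor the paper's one-line deduction can establish the Banach-lattice monotonicity for the norm as defined in (\ref{KSpspace}).
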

\begin{proof}
By the Rybakov theorem (see also \cite[Theorem IX.2.2]{JD}),
there is $x^{\prime}_0 \in X^{\prime}$ with  
$\|x^{\prime}_0\| \leq 1$, such that $\lambda= x_0^{\prime}
\mu$ is a control measure of $\mu$.
If $f$, $g \in K{S^p}[\mu]$,
$|f| \leq |g|$ $\lambda$-almost everywhere,
$k \in \mathbb{N}$ and
$x^{\prime} \in X^{\prime}$ with 
$\|x^{\prime}\| \leq 1$, then   
\begin{eqnarray}\label{monotonicity}
\left( (L)\int_{T}{\mathcal E}_k(t)
|f(t)|d|x^{\prime}\mu| \right)^p \leq
\left( (L)\int_{T}{\mathcal E}_k(t)
|g(t)|d|x^{\prime}\mu| \right)^p 
\end{eqnarray} 
(see also \cite[Proposition 5]{EP}), and hence
$\|f\|_{K{S^p}[\mu]} \leq \|g\|_{K{S^p}[\mu]}$.
By (\ref{monotonicity}), we can deduce that 
$K{S^p}[\mu]$ is a Banach lattice, because
$K{S^p}[\mu]$ is the completion of $L^1[\mu]$ with
respect to $\|\cdot\|_{K{S^p}[\mu]}$, $L^1[\mu]$
is a Banach lattice and the lattice operations 
are continuous with respect to norms
(see also \cite[Proposition 1.1.6 i)]{MN}).
Since $L^1[\mu]$ is solid with respect to the space 
of $\lambda$-measurable functions (see also 
\cite{PANCHA}) and the closure of every solid subset
of a Banach lattice is solid 
(see also \cite[Proposition 1.2.3 i)]{MN}), 
arguing similarly as in (\ref{monotonicity}) we obtain that,
if $f$ is $\lambda$-measurable, $g \in {K{S^p}[\mu]}$
and $|f| \leq |g|$ $\mu$-almost everywhere, then
$g \in {K{S^p}[\mu]}$. 

If $A \in \Sigma$, then $\lambda(A) < + \infty$ and 
$\chi_A \in {L^1[\mu]}$ (see also \cite[Proposition 5]{EP}), 
and hence 
$\chi_A \in {K{S^p}[\mu]}$. Therefore,
$K{S^p}[\mu]$ is  
a K\"{o}the function space. 

Finally, we prove that $\chi_T$ is a weak order unit of
$K{S^p}[\mu]$. First, note that 
$\chi_{T} \in
L^p[\mu]$, and hence $\chi_{T} \in 
K{S^p}[\mu]$. Let $f \in K{S^p}[\mu]$ be such 
that $f^*=f \wedge \chi_T = 0$
$\mu$-almost everywhere. We get
\begin{eqnarray}\label{ft0}
\{t \in T: f^*(t)=0 \}= \{t \in T: f(t)=0 \},
\end{eqnarray} and hence 
$f=0$ $\mu$-almost everywhere. 
This ends the proof.
\end{proof}
Note that, by the definition of the $(KL)$-integral, 
the norm defined in 
(\ref{KSpspace}) corresponds, in a certain sense, to the topology 
associated with norm convergence of the 
integrals (\emph{$\mu$-topology}, 
see also \cite[Theorem 2.2.2]{IRENE}). However, with this norm,
it is not natural to define an inner product in the space  
$KS^2$, since $m$ is vector-valued.

On the other hand, when $X^{\prime}$ is separable and 
$\{x^{\prime}_h$: $h \in \mathbb{N} \}$ is a countable dense subset of  
$X^{\prime}$, with $\|x^{\prime}_h\|\leq 1$ for every $h$,
it is possible to deal with the topology related 
to weak convergence of integrals (\emph{weak $\mu$-topology}, 
see also \cite[Proposition 2.1.1]{IRENE}), whose 
corresponding norm is given by 
\begin{eqnarray}\label{KSpspaceweaktopology}
\|f\|_{K{S^p}[w\tau\mu]} = \left\{
\begin{array}{ll} \displaystyle{
\left[ \sum_{h=1}^{\infty} \omega_h \left(
\sum_{k=1}^{\infty} \eta_k \left| 
(L)\int_{T}{\mathcal E}_k(t)
f(t)\,d|x^{\prime}_h\mu|  \right|^p\right) \right]^{1/p}
}& \text{if  } 1 \leq p < \infty, \\ \\ \displaystyle{
\sup_{h \in \mathbb{N}}
\left[ \sup_{k \in \mathbb{N}} \left| (L)
\int_{T} {\mathcal E}_k(t)f(t)
d|x^{\prime}_h\mu|
\right| \right] }& \text{if  } p=\infty,
\end{array}\right.
\end{eqnarray}
where ${\mathcal E}_k$, $k \in \mathbb{N}$,
is as in (\ref{KSpspace}), and
$(\eta_k)_k$, $(\omega_h)_h$ are two fixed sequences 
of strictly positive real numbers, such that 
$\displaystyle{\sum_{k=1}^{\infty}\eta_k=
\sum_{h=1}^{\infty}\omega_h=1}$.
Note that, in general, weak $\mu$-topology does not coincide with
$\mu$-topology, but there are some cases in which they are 
equal (see also \cite[Theorem 14]{EP}).
Analogously in Proposition \ref{preliminar}, 
it is possible to prove the following 
\begin{prop}\label{secondal}
For each     
$f \in L^1[\mu]$  and  $p \geq 1$, it is
\begin{eqnarray}\label{pinftybis}
\|f\|_{KS^p[w\tau\mu]} \leq \|f\|_{KS^{\infty}[w\tau\mu]} .
\end{eqnarray}
\end{prop}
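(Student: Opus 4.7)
The plan is to mimic the proof of Proposition \ref{preliminar}, but apply Proposition \ref{3.1} (inequality (\ref{used})) \emph{twice} in succession: once to the inner sum over $k$, and once to the outer sum over $h$. The case $p = \infty$ is immediate and trivially an equality, so we concentrate on $1 \leq p < \infty$.

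For fixed $h \in \mathbb{N}$, set
\begin{eqnarray*}
a_k^{(h)} = \left| (L) \int_{T} {\mathcal E}_k(t) f(t) \, d|x^{\prime}_h \mu|(t)\right|, \quad k \in \mathbb{N}.
\end{eqnarray*}
Since $f \in L^1[\mu]$ and ${\mathcal E}_k = \chi_{B_k}$, each $a_k^{(h)}$ is finite, and $\sup_k a_k^{(h)} \leq \sup_{h,k} a_k^{(h)} = \|f\|_{KS^{\infty}[w\tau\mu]}$, which we may assume finite (otherwise the inequality is trivial). Applying (\ref{used}) with the sequence $(\eta_k)_k$ (whose terms sum to $1$) gives, for every $h$,
\begin{eqnarray*}
\sum_{k=1}^{\infty} \eta_k \, (a_k^{(h)})^p \leq \left( \sup_{k \in \mathbb{N}} a_k^{(h)} \right)^p \leq \|f\|_{KS^{\infty}[w\tau\mu]}^{\, p} .
\end{eqnarray*}

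Next, setting $b_h = \left( \sum_{k=1}^{\infty} \eta_k (a_k^{(h)})^p \right)^{1/p}$, the previous inequality gives $b_h \leq \sup_{k} a_k^{(h)}$, so that $\sup_h b_h \leq \|f\|_{KS^{\infty}[w\tau\mu]}$. A second application of (\ref{used}), now with the sequence $(\omega_h)_h$ (whose terms also sum to $1$), yields
\begin{eqnarray*}
\|f\|_{KS^p[w\tau\mu]} = \left( \sum_{h=1}^{\infty} \omega_h \, b_h^{\,p} \right)^{1/p} \leq \sup_{h \in \mathbb{N}} b_h \leq \|f\|_{KS^{\infty}[w\tau\mu]} ,
\end{eqnarray*}
which is (\ref{pinftybis}).

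There is no real obstacle here; the only thing to check is that both sequences $(\eta_k)_k$ and $(\omega_h)_h$ are strictly positive with unit sum, which is exactly what is assumed in the definition (\ref{KSpspaceweaktopology}), so Proposition \ref{3.1} applies to each. The proof is essentially a nested repetition of the argument used for Proposition \ref{preliminar}.
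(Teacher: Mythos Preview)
Your proof is correct and is precisely the analogue of Proposition~\ref{preliminar} that the paper intends: the paper does not give a separate proof for Proposition~\ref{secondal} but simply states that it follows ``analogously [to] Proposition~\ref{preliminar}''. Your nested application of Proposition~\ref{3.1}---once for the inner sum in $k$ and once for the outer sum in $h$---is exactly the right way to carry this out, and nothing more is needed.
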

Now we give the next fundamental result.
\begin{prop}\label{norm2}
The map $f \mapsto \|f\|_{KS^p[w\tau\mu]}$ defined 
in \rm (\ref{KSpspaceweaktopology}) \em is a norm.
\end{prop}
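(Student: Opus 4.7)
The strategy is to mirror the proof of Theorem \ref{norm}, isolating the one place where the countable nature of $\{x'_h : h \in \mathbb{N}\}$ --- in place of the full closed unit ball of $X'$ --- forces a new step. Non-negativity is immediate from (\ref{KSpspaceweaktopology}), and absolute homogeneity follows at once from linearity of the $|x'_h\mu|$-integral in the integrand.

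For the triangle inequality, my plan is to reindex the double weight $\{\omega_h \eta_k\}_{(h,k) \in \mathbb{N}\times\mathbb{N}}$ as a single sequence of strictly positive numbers summing to one; then, setting $b_{h,k}=(L)\int_T {\mathcal E}_k f\, d|x'_h\mu|$ and $c_{h,k}=(L)\int_T {\mathcal E}_k g\, d|x'_h\mu|$, one has $|b_{h,k}+c_{h,k}|\leq |b_{h,k}|+|c_{h,k}|$, and the desired inequality will follow from Proposition \ref{minkowski}, which uses only nonnegativity and total mass one of the weights. The case $p=\infty$ is elementary from the sup formulation.

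For definiteness, I would first reduce to $1\leq p<\infty$ (the case $p=\infty$ follows from Proposition \ref{secondal}) and assume $\|f\|_{KS^p[w\tau\mu]}=0$. Since every $\omega_h$ and every $\eta_k$ is strictly positive, this forces $\left|(L)\int_T {\mathcal E}_k(t) f(t)\, d|x'_h\mu|\right|=0$ for all $h,k\in\mathbb{N}$. I would then argue by contradiction as in Theorem \ref{norm}: if $f\neq 0$ $\mu$-a.e., at least one of $\mu(E^+)$, $\mu(E^-)$ is nonzero, say $\mu(E^+)\neq 0$, and Hahn-Banach gives $x'_0\in X'$ with $\|x'_0\|\leq 1$ and $x'_0\mu(E^+)\neq 0$.

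The hard part will be that this $x'_0$ need not sit in the countable set $\{x'_h\}$. To bridge the gap, I plan to use density of $\{x'_h\}$ in the unit ball of $X'$ together with the estimate $|x'\mu(E^+)-x'_0\mu(E^+)|\leq \|x'-x'_0\|\cdot\|\mu(E^+)\|$ to select $x'_{h^*}$ with $\|x'_{h^*}-x'_0\|$ so small that $|x'_{h^*}\mu(E^+)|>\tfrac{1}{2}|x'_0\mu(E^+)|>0$, whence $|x'_{h^*}\mu|(E^+)>0$. Then $\sigma$-additivity of $x'_{h^*}\mu$ yields $\overline{m}\in\mathbb{N}$ with $|x'_{h^*}\mu|(E^+_{\overline{m}})>0$, and the choice of a small $\overline{\varepsilon}$, the application of the separability of $\mu$ to pick $B_{k_0}\in\mathbb{B}$, and the chain of estimates leading to (\ref{albaicinfin}) transcribe verbatim with $x'_{h^*}$ and $\overline{m}$ in place of $x'_0$ and $\overline{n}$, producing $\left|(L)\int_T {\mathcal E}_{k_0} f\, d|x'_{h^*}\mu|\right|>0$; this contradicts the vanishing of the integrals for the specific indices $h=h^*$, $k=k_0$. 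The case $\mu(E^-)\neq 0$ is handled symmetrically by replacing $f$ with $-f$, closing the proof.
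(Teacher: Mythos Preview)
Your proposal is correct and follows essentially the same route as the paper's proof: reduce definiteness to $1\leq p<\infty$ via Proposition~\ref{secondal}, use strict positivity of the weights to force all integrals $\int_T {\mathcal E}_k f\, d|x'_h\mu|$ to vanish, then run the contradiction argument of Theorem~\ref{norm} after passing from the Hahn--Banach functional $x'_0$ to a nearby $x'_{h^*}$ via the density of $\{x'_h\}$. The only cosmetic difference is that the paper states a double-sum Minkowski inequality (\ref{mink2}) directly for the triangle inequality, whereas you reindex $\{\omega_h\eta_k\}$ as a single summable sequence and invoke Proposition~\ref{minkowski}; these are equivalent.
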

\begin{proof}
First of all note that 
$\|f\|_{KS^p[\mu]} \geq 0$ 
for any $f \in L^1[\mu]$. 
Let $f \in L^1[\mu]$ be such that
$\|f\|_{KS^p[\mu]}=0$. We prove that $f=0$
$\mu$-almost everywhere. It will be enough
to prove the assertion for
$1 \leq p < \infty$, since the case 
$p=\infty$ follows from (\ref{pinftybis}).
Arguing analogously as in (\ref{albaicin1}), we get
\begin{eqnarray}\label{albaicin2}
\left|(L) \int_{T}{\mathcal E}_k(t)
f(t) \, d|x^{\prime}_h \mu |(t)\right|=0 \quad 
\text{for every  } h, k \in \mathbb{N}.
\end{eqnarray}
By contradiction, suppose that $f\neq 0$ 
$\mu $-almost everywhere. If
$E^+=f^{-1}(]0,+\infty[)$, $E^-=f^{-1}(]-\infty,0[)$, 
then $E^+$, $E^- \in \Sigma$, since $f$ is
$\Sigma$-measurable, and we have 
$\mu(E^+) \neq 0$ or $\mu(E^-) \neq 0$.
Suppose that $\mu(E^+) \neq 0$.
By the Hahn-Banach theorem, there is
$x^{\prime}_0 \in X^{\prime}$ with  
$\|x^{\prime}_0\| \leq 1$, $x^{\prime}_0\, \mu(E^+) 
\neq 0$, and hence $|x^{\prime}_0\, \mu(E^+)| 
> 0$. Since the set $\{x^{\prime}_h$: $h \in \mathbb{N}\}$ 
is dense in $x^{\prime}$ with respect to the 
norm of $X^{\prime}$, there is a positive integer $h_0$ with
\begin{eqnarray}\label{albaicinsecondversion}
|x^{\prime}_{h_0}\, \mu(E^+)| > 0. 
\end{eqnarray}
Without loss of generality, we can assume 
$\|x^{\prime}_{h_0}\| \leq 1$. 
Now, the proof continues analogously as that of Theorem 
\ref{norm}, by replacing the linear continuous functional
$x^{\prime}_0$ in (\ref{albaicinfin})
with $x^{\prime}_{h_0}$ found in (\ref{albaicinsecondversion}),
by finding another element $x^{\prime}_{h_1} \in
X^{\prime}$ with
$|x^{\prime}_{h_1}\, \mu(E^-)| > 0$, and by arguing again as in
(\ref{albaicinfin}).

The triangular property of the norm is straightforward for
$p=\infty$, and for $1 \leq p < \infty$ is a consequence of 
the inequality
\begin{eqnarray}\label{mink2} \nonumber
\left[\sum_{h=1}^{\infty}\omega_h \left(
\sum_{k=1}^{\infty}\eta_k |b_{k,h} + c_{k,h}|^p 
\right) \right]^{1/p} &\leq& \left[\sum_{h=1}^{\infty}\omega_h
\left(\sum_{k=1}^{\infty} 
\eta_k (|b_{k,h}| + |c_{k,h}|)^p \right) \right]^{1/p}  \\ 
&\leq& \left[\sum_{h=1}^{\infty}\omega_h
\left( \sum_{k=1}^{\infty} 
\eta_k |b_{k,h}|^p \right)\right]^{1/p}
+\left[\sum_{h=1}^{\infty}\omega_h \left(\sum_{k=1}^{\infty} 
\eta_k |c_{k,h}|^p \right)\right]^{1/p}, 
\end{eqnarray} 
which holds whenever $(b_{k,h})_{k,h}$, $(c_{k,h})_{k,h}$ 
are two double sequences of real numbers, and 
$(\eta_k)_k$, $(\omega_h)_h$ are two sequences 
of positive real numbers, such that 
$\displaystyle{\sum_{h=1}^{\infty}\omega_h=
\sum_{k=1}^{\infty}\eta_k}=1$. The inequality in (\ref{mink2}), 
as that in (\ref{mink1}), follows from Minkowski's inequality.
The other properties are easy to check.
\end{proof}
Now, in correspondence with the norm defined 
in (\ref{KSpspaceweaktopology}), we define the following 
bilinear functional $\langle \cdot,\cdot \rangle : L^1[\mu] 
\times  L^1[\mu] \to \mathbb{R}$ by
\begin{eqnarray}\label{bilinear}
\langle f,g \rangle_{KS^2[w\tau\mu]}
=\sum_{h=1}^{\infty} 
\omega_h \left[\sum_{k=1}^{\infty} 
\tau_k \left((L) \int_{T}{\mathcal E}_k(t)
f(t) d|x^{\prime}_h \mu |(t)\right) \, 
\left((L)\int_{T}{\mathcal E}_k(s)
g(s) d|x^{\prime}_h \mu |(s)\right) \right].
\end{eqnarray}
Arguing similarly as in Theorem \ref{norm2}, it is possible to see
that the functional $\langle \cdot,\cdot \rangle_{KS^2[w\tau\mu]}$ in
(\ref{bilinear}) is an inner product, and  
$$\|\cdot\|_{K{S^2}[w\tau\mu]}=
(\langle \cdot,\cdot \rangle_{KS^2[w\tau\mu]})^{1/2}.$$ 
For $1 \leq p \leq \infty$, the \emph{Kuelbs-Steadman space} 
$K{S^p}[w\tau\mu]$ is
the completion of $L^1[\mu] $ 
with respect to the norm 
defined in (\ref{KSpspaceweaktopology}). 
Observe that, using Proposition \ref{3.1}, we can see that 
$$\|\cdot\|_{K{S^p}[w\tau\mu]} \leq \|\cdot\|_{K{S^p}[\mu]}
\text{   and   }  
\|\cdot\|_{K{S^p}[w\tau\mu]} \leq \|\cdot\|_{HKL}
 \text{   for  } 1 \leq p \leq \infty.$$
As in Theorem \ref{embedding},
it is possible to prove the following  
   \begin{thm}\label{embedding2}
   For each $p$, $q$ with $1 \leq p \leq \infty$ and
$1 \leq q \leq \infty$, it is 
$L^q[\mu] \subset K{S^p}[w\tau\mu]$ continuously and densely,
and the space of all $\Sigma$-simple functions is
dense in $K{S^p}[w\tau\mu]$.
Moreover, ${K{S^p}[w\tau\mu]}$ is a 
separable Banach lattice 
with a weak order unit and 
a K\"{o}the function space with respect to a control
measure $\lambda$ of $\mu$.
\end{thm}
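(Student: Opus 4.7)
The plan is to transcribe the arguments from Theorem \ref{embedding}, the preceding separability proposition, and the Banach-lattice/K\"othe theorem, systematically replacing the single functional $x^{\prime}$ by the countable dense family $\{x^{\prime}_h : h \in \mathbb{N}\}$. The whole argument is made painless by the inequality $\|\cdot\|_{K{S^p}[w\tau\mu]} \leq \|\cdot\|_{K{S^p}[\mu]}$ (already noted right after (\ref{bilinear})), which is obtained by applying Proposition \ref{3.1} to the outer sum in $h$ together with the fact that $\|x^{\prime}_h\|\leq 1$ so that each inner expression is bounded by $\|f\|_{KS^p[\mu]}$.

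First, for the continuous embedding I would chain $\|f\|_{K{S^p}[w\tau\mu]} \leq \|f\|_{K{S^p}[\mu]} \leq M\|f\|_{L^q[\mu]}$, the second inequality being Theorem \ref{embedding}. Density of $\Sigma$-simple functions in $K{S^p}[w\tau\mu]$ then follows by the same two-step $\varepsilon/2$-argument used at the end of the proof of Theorem \ref{embedding}: by construction $L^1[\mu]$ is dense in $K{S^p}[w\tau\mu]$, and $\Sigma$-simple functions are dense in $L^1[\mu]$ with respect to $\|\cdot\|_{L^1[\mu]}$ (see \cite[Corollary 2.1.10]{PANCHA}); combining this with the embedding constant $M$ gives the desired approximation. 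For separability, I would pick a countable set $\mathcal{H}\subset L^1[\mu]$ dense in $L^1[\mu]$ (which exists since $\mu$ is separable, by \cite[Proposition 2.3]{AFP}), and apply exactly the same $\varepsilon/2$-triangle inequality used in the previous separability proposition.

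For the Banach-lattice and K\"othe-space properties, the monotonicity inequality (\ref{monotonicity}) applied termwise to $|x^{\prime}_h \mu|$ for each $h\in\mathbb{N}$, followed by monotonicity of the series $\sum_h \omega_h(\cdot)$ and $\sum_k \eta_k(\cdot)$, yields $\|f\|_{K{S^p}[w\tau\mu]} \leq \|g\|_{K{S^p}[w\tau\mu]}$ whenever $|f|\leq |g|$ $\lambda$-almost everywhere, where $\lambda=x^{\prime}_0\mu$ is the Rybakov control measure. Then the fact that $L^1[\mu]$ is a Banach lattice, solid in the $\lambda$-measurable functions, together with continuity of the lattice operations under completion (see \cite[Propositions 1.1.6 i) and 1.2.3 i)]{MN}), upgrades $K{S^p}[w\tau\mu]$ to a Banach lattice that is solid; since $\chi_A\in L^1[\mu]\subset K{S^p}[w\tau\mu]$ for each $A\in\Sigma$ (by \cite[Proposition 5]{EP}), this gives the K\"othe property. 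For the weak order unit, $\chi_T \in L^{\infty}[\mu] \subset K{S^p}[w\tau\mu]$; if $f\wedge \chi_T=0$ $\mu$-almost everywhere, then (\ref{ft0}) forces $f=0$ $\mu$-almost everywhere.

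The only place where one has to be slightly alert is the injectivity of $\|\cdot\|_{K{S^p}[w\tau\mu]}$, which is needed implicitly to make sense of the lattice/K\"othe statements modulo $\mu$-null sets; but this is exactly the content of Proposition \ref{norm2} already proved in the excerpt. Hence no genuine new obstacle arises: the argument is a mechanical, uniform-in-$h$ adaptation of the proofs given earlier, and the main thing to check is that the outer average $\sum_h \omega_h(\cdot)$ preserves each of the three properties (monotonicity, positivity, and compatibility with the completion) that drove the earlier arguments.
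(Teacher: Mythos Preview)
Your proposal is correct and follows essentially the same approach as the paper, which gives no explicit proof but only the remark ``As in Theorem \ref{embedding}, it is possible to prove the following'' together with the inequality $\|\cdot\|_{K{S^p}[w\tau\mu]} \leq \|\cdot\|_{K{S^p}[\mu]}$ recorded just before the statement. Your write-up is a faithful and accurate expansion of that hint: chaining through $\|\cdot\|_{K{S^p}[\mu]}$ for the embedding, repeating the two-step density and separability arguments verbatim, and running the Banach-lattice/K\"othe proof termwise in $h$ are exactly the adaptations the paper has in mind.
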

Since $({K{S^2}[w\tau\mu]}, \langle \cdot,\cdot \rangle_{KS^2[w\tau\mu]})$ is a separable Hilbert space, by 
applying \cite[Theorems 5.15 and 8.7]{GZ}, 
it is possible to consider operators like, for instance,
convolution and Fourier transform, and to extend the theory 
there studied to the context of
vector-valued measures (see also \cite{OP}, \cite[Remark 5.16]{GZ}).
\section{Conclusions}
We have introduced Kuelbs-Steadman spaces 
related to integration for scalar-valued functions with 
respect to a $\sigma$-additive measure $\mu$, 
taking values in
a Banach space $X$. We have endowed them
with the structure of Banach space, both in connection with 
norm convergence of integrals and in connection with weak
convergence of integrals ($KS^p[\mu]$ and 
$KS^p[w\tau\mu]$, respectively). A fundamental role is 
played by the separability of $\mu$. 
We have proved that these spaces are separable Banach 
lattices and K\"{o}the function spaces, and  
can be embedded 
continuously and densely in the spaces 
$L^q[\mu]$. When $X^{\prime}$ is 
separable, we have endowed 
$KS^2[w\tau\mu]$ with an inner product. 
In this case, $KS^2[w\tau\mu]$ is a separable 
Hilbert space, and hence it is possible to 
deal with operators like
convolution and Fourier transform, and to extend 
to Banach space-valued measures the
theory investigated in \cite{GILLSURVEY, GZ, KHM}.  
\section*{Acknowledgments}
This research was partially supported by 
University of Perugia, 
the  G. N. A. M. P. A. (the Italian National Group of
Mathematical Analysis, Probability and Applications), 
and by the projects
``Ricerca di Base 2017'' (Metodi di Teoria
dell'Approssimazione e di Analisi Reale per problemi di
approssimazione ed applicazioni), 
``Ricerca di Base 2018'' (Metodi di Teoria
dell'Approssimazione, Analisi Reale, Analisi Nonlineare
e loro applicazioni) and ``Ricerca di Base 2019'' (Metodi di
approssimazione, misure, analisi funzionale, statistica e 
applicazioni alla ricostruzione di immagini e documenti).
\section*{Conflict of interest}
The authors declare that they have no conflict of interest.
\section*{Author contributions:} Conceptualization, B.H. and H.K.; methodology, A.B., B.H., and H.K.; validation,
A.B.; formal analysis, A.B.; investigation, A.B., B.H., and H.K.; resources, A.B., B.H., and H.K.; data curation,
A.B.; writing, original draft preparation, B.H. and H.K.; writing, review and editing, A.B.; visualization, A.B.;
supervision, A.B.; project administration, A.B.; funding acquisition, A.B. All authors have read and agreed to the
published version of the manuscript.
\bibliographystyle{amsalpha}

\end{document}